\title{A second-order structure-preserving discretization for the Cahn-Hilliard/Allen-Cahn system with cross-kinetic coupling}
\def\div{\operatorname{div}}
\def\RR{\mathbb{R}}
\def\LL{\mathbf{L}}
\def\hbmu{\hat{\bar{\mu}}}
\def\hbrho{\hat{\bar{\rho}}}
\def\hbeta{\hat{\bar{\eta}}}
\def\Th{\mathcal{T}_h}
\def\Itau{\mathcal{I}_\tau}
\def\I{I}
\def\la{\langle}
\def\ra{\rangle}
\def\err{\operatorname{err}}
\def\Th{\mathcal{T}_h}
\def\Vh{\mathcal{V}_h}
\DeclarePairedDelimiter{\norm}{\|}{\|}
\DeclarePairedDelimiter{\snorm}{|}{|}
\newcommand{\na}{\nabla}
\def\E{\mathcal{E}}
\def\D{\mathcal{D}}
\def\na{\nabla}
\def\bmu{\bar{\mu}}
\def\hbmu{\hat{\bar{\mu}}}
\def\hbrho{\hat{\bar{\rho}}}
\def\hbeta{\hat{\bar{\eta}}}
\def\bLL{\bar{\LL}}
\def\hbLL{\hat{\bar{\LL}}}
\newtheorem{lemma}{Lemma}
\newtheorem{problem}[lemma]{Problem}
\newtheorem{theorem}[lemma]{Theorem}
\theoremstyle{definition}
\newtheorem{remark}[lemma]{Remark}
\def\dt{\partial_t}
\def\dtt{\partial_{tt}}
\def\Vh{\mathcal{V}_h}
\def\changestwo#1{#1}
\begin{document}
\author[1]{Aaron Brunk\corref{cor1}}
\cortext[cor1]{Corresponding author}
\ead{abrunk@uni-mainz.de}
\author[2,3]{Herbert Egger}
\ead{herbert.egger@jku.at}
\author[3]{Oliver Habrich}
\ead{oliver.habrich@jku.at}
\affiliation[1]{organization={Institute of Mathematics, Johannes Gutenberg-University},
city={Mainz},
country={Germany}}
\affiliation[2]{organization={Johann Radon Institute for Computational and Applied Mathematics},city={Linz},country={Austria}}
\affiliation[3]{organization={Institute for Numerical Mathematics, Johannes Kepler University},city={Linz},country={Austria}}

\begin{abstract}
We study the numerical solution of a Cahn-Hilliard/Allen-Cahn system with strong coupling through state and gradient dependent non-diagonal mobility matrices. 
A fully discrete approximation scheme in space and time is proposed which preserves the underlying gradient flow structure and leads to dissipation of the free-energy on the discrete level. 
Existence and uniqueness of the discrete solution is established and relative energy estimates are used to prove optimal convergence rates in space and time under minimal smoothness assumptions. Numerical tests are presented for illustration of the theoretical results and to demonstrate the viability of the proposed methods.
\end{abstract}

\begingroup
\def\uppercasenonmath#1{} 
\let\MakeUppercase\relax 
\maketitle
\endgroup

\begin{quote} 
\noindent 
{\small {\bf Keywords:} 
Phase separation, 
cross-kinetic coupling, 
Cahn-Hilliard/Allen-Cahn system, 
variational discretization schemes,
finite element methods
}
\end{quote}
\begin{quote}
\noindent
{\small {\bf AMS-classification (2000):}
35K52, 
35K55, 
65M12, 
65M60, 
82C26 
}
\end{quote}

\begin{center} 'Declarations of interest: none' \end{center}

\bigskip

\section*{Introduction}
\label{sec:intro}
Phase-field models involving, at the same time,  conserved and non-conserved quantities have been proposed to describe the simultaneous phase separation and ordering in binary alloys~\cite{cahn1994}. 
%
%
Similar models have been applied recently for modelling phase transformations in solid-state sintering~\cite{Boussinot2013} and, more generally, in the context of grain boundary segregation~\cite{ABDELJAWAD2017528}.  
In this work we study the numerical approximation of coupled systems of Cahn-Hilliard and Allen-Cahn equations of the form 
\begin{alignat}{2}
\dt\rho &= \div(\LL_{11}\nabla\mu_\rho + \LL_{12}\mu_\eta), \qquad & \mu_\rho &= -\gamma_\rho\Delta\rho + f_\rho(\rho,\eta), \label{eq:s1}\\
\dt\eta &= - \LL_{12}\cdot\nabla\mu_\rho - \LL_{22}\mu_\eta, \qquad & \mu_\eta &= -\gamma_\eta\Delta\eta + f_\eta(\rho,\eta).\label{eq:s2}
\end{alignat}
Here $\rho$ and $\eta$ are the conserved and non-conserved quantities, $\gamma_\rho$, $\gamma_\eta$ in turn are the corresponding interface parameters, and $f(\rho,\eta)$ is a free energy density, and $f_\rho=\partial_\rho f$, $f_\eta=\partial_\eta f$ denote the partial derivatives of the function $f(\cdot,\cdot)$.
Furthermore, $\LL$ is a generalized mobility matrix, which is assumed symmetric and positive definite, but may in general depend on the phase fields $\rho$, $\eta$ as well as their gradients. 
The chemical potentials 
$\mu_\rho=\delta_\rho E(\rho,\eta)$, $\mu_\eta=\delta_\eta E(\rho,\eta)$ 
are the variational derivatives of the total free energy 
\begin{equation}
\mathcal{E}(\rho,\eta) := \int_\Omega E(\rho,\eta) := \int_\Omega \tfrac{\gamma_\rho}{2}\snorm{\nabla\rho}^2  + \tfrac{\gamma_\eta}{2}\snorm{\nabla\eta}^2 + f(\rho,\eta), 
\end{equation}
with respect to $\rho$ and $\eta$. As a consequence, \eqref{eq:s1}--\eqref{eq:s2} can be considered as a generalized gradient flow describing the continuous decay of the free energy $E(\rho,\eta)$ along solutions. 

\subsection*{Diagonal mobilities.}%
Most of the results on Cahn-Hilliard/Allen-Cahn systems available in the literature are concerned with the diagonal diffusion case, i.e., $\LL_{12}=\mathbf{0}$. 
For existence results in various settings, see e.g. \cite{BROCHET199483,Miranville2019,DalPasso1999}. 
Sharp interface limits have been studied in \cite{Nurnberg,cahn1996limiting,NOVICKCOHEN20001} and a corresponding numerical method can be found in \cite{Barrett}.
In \cite{Yang} a fully implicit time integration scheme using finite differences is validated by numerical tests. 
The authors of \cite{Xia} introduce an energy-stable local discontinuous Galerkin scheme and show second-order convergence experimentally.
Huang et al. \cite{Huang} propose an energy-stable second order space-time finite-difference scheme using discrete variational calculus in the spirit of averaged vector field methods \cite{Celledoni12,Hairer14}.

\subsection*{Cross-kinetic coupling.}
In order to circumvent spurious effects at the interface \cite{BolladaEtAl2018}, the incorporation of cross-kinetic coupling $\LL_{12} \ne 0$ has been proposed in \cite{Brener2012}. 
The well-posedness of related models has been investigated in \cite{Brunk22}. 
In this paper, we study the systematic numerical approximation of such Cahn-Hilliard/Allen-Cahn systems with cross-kinetic coupling terms. 
Our main contributions can be summarized as follows: 
\begin{itemize}\itemsep1ex
\item an unconditionally energy stable and mass conservative discretization scheme is proposed for the CH/AC system with gradient-dependent cross-coupling;
\item a discrete stability analysis is developed based on relative energy arguments; 
\item order optimal convergence rates are established in the presence of various nonlinearities and under minimal smoothness assumptions on the solution.  
\end{itemize}
To the best of our knowledge, this paper contains the first complete error analysis for a second order approximation of phase-field models with cross-kinetic coupling and gradient dependent mobilities. 
As a complement to the analytical results of the paper, the practicability of the proposed method and the validity of the theoretical results will be illustrated by numerical tests.

\subsection*{Related problems.}
Our method and its convergence analysis can in principle be generalized to related multi-component Cahn-Hilliard or Allen-Cahn systems, for which a wider literature is available. 
The numerical solution of degenerate Cahn-Hilliard systems by a variational inequality approach has been considered in \cite{BarrettBloweyGarcke01,ZhouXie23}. 
Various splitting methods have been proposed in \cite{Lee12,Li22,YangKim21} to accelerate the numerical solution of the coupled nonlinear systems. 
A rigorous numerical analysis for a first order scheme has been presented in \cite{BarrettBlowey99}.
In \cite{KimKang09}, a second order numerical scheme has been investigated for the solution of a ternary Cahn-Hilliard system, which is somewhat related to the method proposed in this paper, and convergence rates have been demonstrated numerically. 

\subsection*{Outline.}
The remainder of the manuscript is organized as follows: 
Section~\ref{sec:prelim} presents our notation and basic assumptions,
and the basic ingredients for our discretization strategy.
In Section~\ref{sec:main}, we then introduce our numerical method and state our main theoretical results. 
Section~\ref{sec:stab} is concerned with the stability of discrete solutions, which is the  key ingredient for our error analysis presented in Section~\ref{sec:err}.
Some auxiliary tools are summarized in the appendix.
For illustration of our theoretical results, we present some numerical tests in Section~\ref{sec:num}, and the paper closes with a short discussion.
%

\section{Preliminaries} \label{sec:prelim}

Before we present our discretization method and main results in detail, let us briefly introduce our notation and main assumptions, and recall some basic facts.

\subsection{Notation}
The system \eqref{eq:s1}--\eqref{eq:s2} is investigated on a finite time interval $(0,T)$. 
To avoid the discussion of boundary conditions, we consider a spatially periodic setting, i.e., 
\begin{itemize}
\item[(A0)] $\Omega \subset \RR^d$, $d=2,3$ is a cube and identified with the $d$-dimensional torus $\mathcal{T}^d$.\\
Moreover, functions on $\Omega$ are assumed to be periodic throughout the paper. 
\end{itemize}
By $L^p(\Omega)$, $W^{k,p}(\Omega)$, we denote the corresponding Lebesgue and Sobolev spaces of periodic functions with norms $\norm{\cdot}_{L^{p}}$ and $\norm{\cdot}_{W^{k,p}}$. 
As usual, we abbreviate $H^k(\Omega)=W^{k,2}(\Omega)$ and write $\norm{\cdot}_{H^{k}} = \norm{\cdot}_{W^{k,2}}$. 
For functions $r \in L^2(\Omega)$, we define the dual norm 
\begin{align} \label{eq:dualnorm}
    \norm{r}_{H^{-k}} = \sup_{v \in H^k(\Omega)} \frac{\la r, v\ra}{\|v\|_{k}}.
\end{align}
Here $\langle \cdot, \cdot\rangle$ denotes the scalar product on $L^2(\Omega)$, which is defined by
\begin{align*}
\la u, v \ra = \int_\Omega u \cdot v  \qquad \forall u,v \in L^2(\Omega).    
\end{align*}
By $L^2_0(\Omega) \subset L^2(\Omega)$, we denote the spaces of square integrable functions with zero average.
As usual, we denote by $L^p(a,b;X)$, $W^{k,p}(a,b;X)$, and
$H^k(a,b;X)$, the Bochner spaces of integrable or differentiable functions on the time interval $(a,b)$ with values in some Banach space $X$. If $(a,b)=(0,T)$, we omit reference to the time interval and briefly write $L^p(X)$. The corresponding norms are denoted, e.g., by $\|\cdot\|_{L^p(X)}$ or $\|\cdot\|_{H^k(X)}$.
%

\subsection{Assumptions on the parameters}

Throughout the paper, we assume that the model parameters are sufficiently smooth and satisfy some typical conditions, i.e.,
\begin{itemize}
    \item[(A1)] the interface parameters $\gamma_\rho,\gamma_\eta$ are positive constants;
    \item[(A2)] for any choice of $\omega=(\rho,\eta,\nabla\rho,\nabla\eta)$, the matrix $\LL(\omega) \in\mathbb{R}^{(d+1) \times (d+1)}$ is symmetric and positive definite with 
    \begin{equation*}
      \lambda_1\snorm{\xi}^2 \leq \mathbf{\xi}^\top\LL(\omega)\mathbf{\xi} \leq \lambda_2\snorm{\xi}^2, \quad \forall \mathbf{\xi}\in\mathbb{R}^{d+1}.
    \end{equation*}
    Furthermore, every component of $\LL(\cdot)$ is a $C^2$ function of its arguments $\omega$, with derivatives uniformly bounded by some constant 
    $\lambda_3$;
    \item[(A3)] the potential $f(\cdot,\cdot)$ is smooth with $f(\rho,\eta)\geq 0$ and satisfies
    \begin{align*}
        \snorm*{\frac{\partial^{k+\ell} f(\rho,\eta)}{\partial^k\rho \, \partial^\ell\eta}} &\leq C_1 \sum |\rho|^{4-k} + |\eta|^{4-\ell} + C_2
    \end{align*}
    for all $0 \le k,\ell,k+\ell \le 4$. 
    Furthermore, the shifted potential
    \begin{equation*}
        f(\rho,\eta) + \frac{\alpha}{2}(\snorm{\rho}^2 + \snorm{\eta}^2) 
    \end{equation*}
    is strictly convex for some $\alpha>0$. 
\end{itemize}
The assumptions in (A3) essentially encode that $f$ is sufficiently smooth and, together with its derivatives, satisfies appropriate growth conditions. 

\subsection{Variational characterization}

Any sufficiently smooth periodic solution of \eqref{eq:s1}--\eqref{eq:s2} on $\Omega \times (0,T)$ can be seen to satisfy the variational identities
\begin{align}
 \la \dt\rho,v_1 \ra + \la \LL_{11}(\omega)\nabla\mu_\rho,\nabla v_1 \ra + \la \mu_\eta\LL_{12}(\omega),\nabla v_1 \ra &= 0, \label{eq:1}\\
 \la \dt\eta,v_2 \ra + \la v_2\LL_{12}(\omega),\nabla\mu_\rho \ra + \la \LL_{22}(\omega)\mu_\eta,v_2 \ra &= 0,\label{eq:2}\\
 \la \mu_\rho,w_1 \ra - \gamma_\rho\la \nabla\rho,\nabla w_1 \ra - \la f_\rho(\rho,\eta),w_1 \ra &= 0, \label{eq:3}\\
 \la \mu_\eta,w_2 \ra - \gamma_\eta\la \nabla\eta,\nabla w_2 \ra - \la f_\eta(\rho,\eta),w_2 \ra &= 0, \label{eq:4}
\end{align}
for $0 \le t \le T$ and sufficiently regular periodic test functions $v_1$, $v_2$ and $w_1$, $w_2$. 
Note that the solution components depend on time $t$, while the test functions are independent of $t$. Hence the variational identities \eqref{eq:1}--\eqref{eq:4} have to be understood pointwise in time. 
Moreover, the symbol $\omega=(\rho,\eta,\nabla\rho,\nabla \eta)$ is again used for abbreviation. \changestwo{If $\dt\rho$ is not regular enough, i.e. $\dt\rho\notin L^1(0,T;L^1(\Omega))$, the inner product $\la \dt\rho,v_1 \ra$ has to be replaced by a dual pairing, c.f. \cite{Feng} for such a paring.}
%

\subsection{Basic properties}

The variational identities \eqref{eq:1}--\eqref{eq:4} allow us to immediately establish some important properties of solutions:
By testing \eqref{eq:1} with $v_1 \equiv 1$, we get 
\begin{align} \label{eq:mass}
\frac{d}{dt} \int_\Omega \rho  = 0,
\end{align}
which encodes the conservation of mass. 
By formal differentiation of the energy $\mathcal{E}(\rho,\eta)$ along a solution in time, we obtain
\begin{align} \label{eq:dissi}
\frac{d}{dt} \mathcal{E}(\rho,\eta) &= \la \dt\rho ,\delta_\rho \mathcal{E}(\rho,\eta) \ra + \la \dt\eta ,\delta_\eta \mathcal{E}(\rho,\eta) \ra  
= \la \dt\rho ,\mu_\rho \ra + \la \dt\eta ,\mu_\eta \ra \\
&= - \la \LL_{11}(\omega)\nabla\mu_\rho,\nabla \mu_\rho \ra - \la \mu_\eta\LL_{12}(\omega),\nabla \mu_\rho \ra -  \la \mu_\eta\LL_{12}(\omega),\nabla\mu_\rho \ra - \la \LL_{22}(\omega)\mu_\eta,\mu_\eta \ra. \notag 
\end{align}
In the second identity, we here used the variational equations \eqref{eq:3} and \eqref{eq:4} with $w_1=\dt\rho$ and $w_2=\dt\eta$, and in the third step, we used \eqref{eq:1} and \eqref{eq:2} with test functions $v_1=\mu_\rho$ and $v_2=\mu_\eta$.
The identity \eqref{eq:dissi} can further be written compactly as 
\begin{align} \label{eq:dissi2}
\frac{d}{dt} \E(\rho,\eta) = -\mathcal{D}_{\rho,\eta}(\mu_\rho,\mu_\eta)
\end{align}
with dissipation functional $\mathcal{D}_{\rho,\eta}(\mu_\rho,\mu_\eta)=\la \begin{pmatrix} \nabla \bar\mu_{\rho} \\ \bar\mu_{\eta}\end{pmatrix},\bLL \begin{pmatrix} \nabla \bar\mu_{\rho} \\ \bar\mu_{\eta}\end{pmatrix}\ra$, which is non-negative since we assumed positive definiteness of the mobility matrix $\LL(\omega)$.
This identity encodes the underlying energy-dissipation principle of the problem, and hence implies the thermodynamic consistency of the model under investigation.

\section{Discretization scheme and main results}  \label{sec:main}

In the following, we introduce a fully practical numerical approximation scheme for our model problem \eqref{eq:s1}--\eqref{eq:s2}, which preserves the basic conservation and dissipation properties of the problem on the discrete level and which yields order optimal error estimates. 

\subsection{Space discretization}
Let us start with introducing the most relevant notation concerning the finite element discretization in space. We consider 
\begin{itemize}\itemsep1ex
\item[(A4)] a geometrically-conforming quasi-uniform partition $\Th$ of $\Omega$ into simplices that can be extended periodically to periodic extensions of $\Omega$. 
\end{itemize}
By quasi-uniform, we mean that there exists a constant $\sigma>0$ such that $\sigma h \le \rho_K \le h_K \le h$ for all $K \in \Th$, where $\rho_K$ and $h_K$ are the inner-circle radius and diameter of the element $K \in \Th$ and $h=\max_{K \in \Th} h_T$ is the global mesh size \cite{BrennerScott}. 
We then denote by 
\begin{align*}
\Vh := \{v \in H^1(\Omega) : v|_K \in P_2(K) \quad \forall K \in \Th\}
\end{align*}
the space of continuous piecewise quadratic functions over $\Th$. 
We write $\pi_h^0 : L^2(\Omega) \to \Vh$ and $\pi_h^1 : H^1(\Omega) \to \Vh$ for the $L^2$- and $H^1$-orthogonal projections, which are defined by
\begin{alignat}{2}
\la \pi_h^0 u - u, v_h\ra &= 0 \qquad && \forall v_h \in \Vh, \label{eq:defl2proj}\\
\la \pi_h^1 u - u, v_h\ra + \la \nabla (\pi_h^1 u - u ), \nabla v_h \ra &= 0 \qquad && \forall v_h \in \Vh.\label{eq:defh1proj}
\end{alignat}
For the convenience of the reader and later reference, some well-known approximation properties of these projection operators are summarized in Appendix~\ref{asec:space}.

\subsection{Time discretization}
We also employ piecewise polynomial functions for the approximation in time. 
For ease of presentation, we consider a uniform grid 
\begin{itemize}\itemsep1ex
\item[(A5)] $\Itau:=\{0=t^0,t^1,\ldots,t^N=T\}$ \quad with time steps $t^n = n \tau$ and $\tau = T/N$.
\end{itemize}
More general non-uniform grids could be treated with minor modifications.
By 
\begin{align}
P_k(\Itau;X) 
\qquad \text{and} \qquad 
P_k^c(\Itau;X) = P_k(\Itau;X) \cap C(0,T;X),
\end{align}
we denote the spaces of discontinuous respectively continuous piecewise polynomial functions of degree $\le k$ over the time grid $\Itau$, with values in some vector space $X$.
We utilize a bar symbol $\bar u$ to denote functions in $P_0(\Itau;X)$ that are piecewise constant in time.

In our analysis, we use the piecewise linear interpolation
$\I_\tau^1:H^1(0,T;X)\to P_1^c(\Itau;X)$ as well as the $L^2$-orthogonal projection
$\bar \pi_\tau^0 : L^2(0,T;X) \to \Pi_0(\Itau;X)$ to piecewise constants in time. 
Some important properties of these operators are again summarized in Appendix~\ref{asec:time}. 
With a slight abuse of notation, we will frequently use the symbol 
\begin{align} \label{eq:proj}
\bar u = \bar \pi_\tau^0 u 
\end{align}
also to abbreviate the piecewise constant projection of a function $u \in L^2(X)$ in time. 
As a final ingredient, we introduce the abbreviations $I_n=(t^{n-1},t^n)$ and
\begin{align} \label{eq:abn}
    \la a, b\ra^n := \int_{t^{n-1}}^{t^n} \la a(s), b(s) \ra \,ds = \int_{I_n} \la a(s), b(s) \ra \,ds
\end{align}
for the individual time intervals and the corresponding integrals.

\subsection{Numerical method}
We can now formulate our discretization scheme, which is based on an inexact Galerkin approximation of the variational identities \eqref{eq:1}--\eqref{eq:4}. 
\begin{problem}\label{prob:pg}
Let $\rho_{h,0},\eta_{h,0} \in \Vh$ be given initial values. 
Then find $\rho_{h,\tau},\eta_{h,\tau} \in P^c_1(\Itau;\Vh)$ and $\bar\mu_{\rho,h,\tau},\bar\mu_{\eta,h,\tau}\in P_0(\Itau;\Vh)$ such that $\rho_{h,\tau}(0)=\rho_{h,0}$, $\eta_{h,\tau}=\eta_{h,0}$, and such that   
\begin{align}
 \la \dt\rho_{h,\tau},\bar v_{1,h,\tau} \ra^n + \la \LL_{11}(\bar \omega_{h,\tau}) \nabla\bar\mu_{\rho,h,\tau} + \bar\mu_{\eta,h,\tau} \LL_{12}(\bar \omega_{h,\tau}),\nabla\bar v_{1,h,\tau} \ra^n  &= 0, \label{eq:pg1}\\
 \la \dt\eta_{h,\tau},\bar  v_{2,h,\tau} \ra^n + \la \LL_{12}(\bar \omega_{h,\tau})\nabla\bar\mu_{\rho,h,\tau}+\LL_{22}(\bar \omega_{h,\tau})\bar\mu_{\eta,h,\tau},\bar v_{2,h,\tau} \ra^n &= 0,\label{eq:pg2}\\
 \la \bar\mu_{\rho,h,\tau},\bar w_{1,h,\tau} \ra^n - \gamma_\rho\la \nabla\bar\rho_{h,\tau},\nabla \bar w_{1,h,\tau} \ra^n - \la f_\rho(\rho_{h,\tau},\eta_{h,\tau}),\bar w_{1,h,\tau} \ra^n &= 0, \label{eq:pg3}\\
 \la \bar\mu_{\eta,h,\tau},\bar w_{2,h,\tau} \ra^n - \gamma_\eta\la \nabla\bar\eta_{h,\tau},\nabla \bar w_{2,h,\tau} \ra^n - \la f_\eta(\rho_{h,\tau},\eta_{h,\tau}),\bar w_{2,h,\tau} \ra^n &= 0, \label{eq:pg4}    
\end{align}
hold for all $\bar v_{1,h,\tau}, \bar v_{2,h,\tau},\bar w_{1,h,\tau},\bar w_{2,h,\tau}\in P_0(I_n;\Vh)$ and all time steps $1 \le n\le N$. 
\end{problem}
Similar to before, the symbol $\omega_{h,\tau}=(\rho_{h,\tau},\eta_{h,\tau},\nabla \rho_{h,\tau},\nabla \eta_{h,\tau})$ here is used for abbreviation and $\bar \omega_{h,\tau} = \bar\pi^0_\tau \omega_{h,\tau}$ denotes the piecewise constant projection in time.  \changestwo{Similar as before  $f_\rho(\rho_{h,\tau},\eta_{h,\tau}), f_\eta(\rho_{h,\tau},\eta_{h,\tau})$ denote the partial derivatives of $f$ after the first and second argument evaluated at $\rho_{h,\tau},\eta_{h,\tau}.$}

\subsection{Main results}

We next formulate our main theorems. 
The first result is concerned with the well-posedness of the discretization scheme and the properties of its solutions.

\begin{theorem}[Existence and properties of discrete solutions] \label{thm:well-posed}$ $\\
Let (A0)--(A5) hold. Then for any $h,\tau>0$ and any choice $\rho_{h,0},\eta_{h,0} \in \Vh$ of initial values, Problem~\ref{prob:pg} has at least one solution.  
Moreover, any such discrete solution satisfies
\begin{align*}
\frac{d}{dt} \int_\Omega \rho_{h,\tau}  = 0
\qquad \text{and} \qquad 
\E(\rho_{h,\tau},\eta_{h,\tau})\Big\vert_{t^{n-1}}^{t^n} = - \int_{I^n} \D_{\bar \rho_{h,\tau},\bar \eta_{h,\tau}}(\bar\mu_{\rho,h,\tau},\bar\mu_{\eta,h,\tau}) \, ds,
\end{align*}
i.e. mass is conserved and energy is dissipated for all time steps $1 \le n \le N$. 
\end{theorem}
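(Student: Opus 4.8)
\section*{Proof proposal}

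The plan is to treat the three assertions separately and to note that each localises to a single time step, since \eqref{eq:pg1}--\eqref{eq:pg4} decouple across the intervals $I_n$. On $I_n$ the functions $\rho_{h,\tau},\eta_{h,\tau}$ are affine in time, so $\dt\rho_{h,\tau}=(\rho^n-\rho^{n-1})/\tau$ and $\dt\eta_{h,\tau}$ are constant in time, whereas $\bar\rho_{h,\tau}=(\rho^n+\rho^{n-1})/2$ and $\bar\eta_{h,\tau}$ are the midpoint values; the remaining unknowns are $\rho^n,\eta^n,\bar\mu_{\rho,h,\tau},\bar\mu_{\eta,h,\tau}\in\Vh$. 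Mass conservation is then immediate: since $\Vh$ contains the constants, the choice $\bar v_{1,h,\tau}\equiv 1$ in \eqref{eq:pg1} annihilates the gradient term and yields $\la\dt\rho_{h,\tau},1\ra^n=0$, i.e. $\int_\Omega\rho_{h,\tau}$ is constant on each $I_n$; continuity in time gives conservation on all of $(0,T)$.

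For the energy identity I would differentiate $\E(\rho_{h,\tau},\eta_{h,\tau})$ along the affine-in-time solution and integrate over $I_n$. Because $\dt\rho_{h,\tau}$ and $\dt\eta_{h,\tau}$ are constant on $I_n$, the gradient contributions become $\gamma_\rho\la\nabla\dt\rho_{h,\tau},\nabla\bar\rho_{h,\tau}\ra^n$ and $\gamma_\eta\la\nabla\dt\eta_{h,\tau},\nabla\bar\eta_{h,\tau}\ra^n$, while the exact chain rule $\tfrac{d}{ds}f(\rho_{h,\tau},\eta_{h,\tau})=f_\rho\,\dt\rho_{h,\tau}+f_\eta\,\dt\eta_{h,\tau}$ turns the potential part into $\la f_\rho(\rho_{h,\tau},\eta_{h,\tau}),\dt\rho_{h,\tau}\ra^n+\la f_\eta(\rho_{h,\tau},\eta_{h,\tau}),\dt\eta_{h,\tau}\ra^n$; this is precisely the averaged-vector-field term built into \eqref{eq:pg3}--\eqref{eq:pg4}. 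The key point is that $\dt\rho_{h,\tau},\dt\eta_{h,\tau}\in P_0(I_n;\Vh)$ are \emph{admissible} test functions, so taking $\bar w_{1,h,\tau}=\dt\rho_{h,\tau}$ in \eqref{eq:pg3} and $\bar w_{2,h,\tau}=\dt\eta_{h,\tau}$ in \eqref{eq:pg4} identifies the energy increment with $\la\bar\mu_{\rho,h,\tau},\dt\rho_{h,\tau}\ra^n+\la\bar\mu_{\eta,h,\tau},\dt\eta_{h,\tau}\ra^n$. Finally, choosing $\bar v_{1,h,\tau}=\bar\mu_{\rho,h,\tau}$ in \eqref{eq:pg1} and $\bar v_{2,h,\tau}=\bar\mu_{\eta,h,\tau}$ in \eqref{eq:pg2} replaces these two pairings by $-\la(\nabla\bar\mu_{\rho,h,\tau},\bar\mu_{\eta,h,\tau})^\top,\bLL(\bar\omega_{h,\tau})(\nabla\bar\mu_{\rho,h,\tau},\bar\mu_{\eta,h,\tau})^\top\ra^n$, which equals $-\int_{I_n}\D_{\bar\rho_{h,\tau},\bar\eta_{h,\tau}}(\bar\mu_{\rho,h,\tau},\bar\mu_{\eta,h,\tau})\,ds$; the two cross terms cancel correctly by symmetry of $\LL$ and the sign is fixed by \eqref{eq:dissi2}. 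Non-negativity of the dissipation follows from (A2). This is the discrete counterpart of the formal computation in the introduction.

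For existence I would fix $1\le n\le N$ and regard \eqref{eq:pg1}--\eqref{eq:pg4} as a nonlinear algebraic system on $\Vh^4$. Given $(\rho^n,\eta^n)$, equations \eqref{eq:pg3}--\eqref{eq:pg4} determine $\bar\mu_{\rho,h,\tau},\bar\mu_{\eta,h,\tau}\in\Vh$ uniquely and continuously — they are the Riesz representatives in $(\Vh,\la\cdot,\cdot\ra)$ of the right-hand sides, whose potential parts $\tfrac1\tau\int_{I_n}f_\rho(\rho_{h,\tau},\eta_{h,\tau})$ and $\tfrac1\tau\int_{I_n}f_\eta(\rho_{h,\tau},\eta_{h,\tau})$ are well defined and continuous in $(\rho^n,\eta^n)$ by the smoothness and growth bounds (A3). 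Substituting these into \eqref{eq:pg1}--\eqref{eq:pg2} reduces the problem to finding a zero of a continuous map $F:\Vh\times\Vh\to\Vh\times\Vh$. The a priori bound needed to close a Brouwer (or Leray--Schauder) argument is exactly the discrete energy estimate of the previous paragraph: any solution satisfies $\E(\rho^n,\eta^n)\le\E(\rho^{n-1},\eta^{n-1})$, and since $f\ge0$ and the gradient terms are quadratic, $\E$ is coercive on $\Vh\times\Vh$ (the mean of $\rho^n$ being fixed by mass conservation, the mean of $\eta^n$ and the remaining norms being controlled through the convexity and growth of $f$ in (A3)). The mean of $\bar\mu_{\rho,h,\tau}$, which the dissipation alone does not see, is pinned down by testing \eqref{eq:pg3} with the constant $1$.

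I expect the existence step to be the main obstacle, the conservation and dissipation identities being essentially forced by the variational design of the scheme. Specifically, the state- and gradient-dependent mobility $\LL(\bar\omega_{h,\tau})$ and the quartic growth of $f$ make both the continuity of $F$ and the coercivity estimate delicate: one must verify that the time-averaged nonlinearities inherit boundedness and continuity from (A2)--(A3), and that the discrete energy bound genuinely dominates the full set of degrees of freedom (including the means of $\rho^n$, $\eta^n$ and of $\bar\mu_{\rho,h,\tau}$) so that the fixed-point theorem applies on a sufficiently large ball. Uniqueness is not claimed here and would require a separate smallness-in-$\tau$ argument.
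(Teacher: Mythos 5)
Your three ingredients are the same as the paper's: mass conservation by testing \eqref{eq:pg1} with $\bar v_{1,h,\tau}\equiv 1$; the exact discrete energy identity by testing \eqref{eq:pg3}--\eqref{eq:pg4} with $\dt\rho_{h,\tau},\dt\eta_{h,\tau}$ and \eqref{eq:pg1}--\eqref{eq:pg2} with $\bar\mu_{\rho,h,\tau},\bar\mu_{\eta,h,\tau}$ (the paper phrases this as ``almost verbatim as on the continuous level''); and existence time step by time step via a finite-dimensional fixed-point theorem whose a priori bound is supplied by the energy identity. The packaging of the fixed-point step differs: the paper applies the Leray--Schauder principle to the full vector of unknowns $x=(\dt\rho_{h,\tau},\dt\eta_{h,\tau},\bar\mu_{\rho,h,\tau},\bar\mu_{\eta,h,\tau})$, whereas you first eliminate the chemical potentials through the invertible mass matrix in \eqref{eq:pg3}--\eqref{eq:pg4} and then seek a zero of a reduced map $F$ on $\Vh\times\Vh$. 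That Schur-complement reduction is legitimate and arguably cleaner, but the paper's choice of unknowns is not cosmetic: it is precisely what makes the a priori bound on $\eta$ automatic, which is where your argument has a gap.

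The gap is the coercivity claim: you assert that the bound $\E(\rho^n,\eta^n)\le\E(\rho^{n-1},\eta^{n-1})$ controls the mean of $\eta^n$ ``through the convexity and growth of $f$ in (A3).'' Assumption (A3) provides only $f\ge 0$, \emph{upper} growth bounds on $f$ and its derivatives, and strict convexity of the shifted potential $f+\tfrac{\alpha}{2}(\snorm{\rho}^2+\snorm{\eta}^2)$; none of these forces $f(\rho,\eta)\to\infty$ as $\snorm{\eta}\to\infty$ (e.g.\ $f\equiv 0$ satisfies (A3)). Since $\eta$ is not conserved, the energy alone therefore does not bound the mean of $\eta^n$, and the ball on which you want to apply Brouwer/Leray--Schauder cannot be chosen from the energy estimate by itself. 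The missing step is to test \eqref{eq:pg2} with $\bar v_{2,h,\tau}\equiv 1$: this gives
\begin{align*}
\snorm[\big]{\la \dt\eta_{h,\tau},1\ra^n}
\le \lambda_2 \int_{I_n}\(\norm{\nabla\bar\mu_{\rho,h,\tau}}_{L^2}+\norm{\bar\mu_{\eta,h,\tau}}_{L^2}\) ds,
\end{align*}
and the right-hand side is controlled by the dissipation term in the energy identity (via (A2) and Lemma~\ref{lem:properties}), so the mean of $\eta^n$ is bounded in terms of $\E(\rho^{n-1},\eta^{n-1})$ after all — but through the evolution equation, not through coercivity of $f$. In the paper's formulation this is invisible because the unknown is $\dt\eta_{h,\tau}$ itself, which is bounded directly by \eqref{eq:pg2} once the dissipation is bounded, and $\eta^n=\eta^{n-1}+\tau\,\dt\eta_{h,\tau}$ inherits the bound. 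With this repair (and the routine remark, which both you and the paper elide, that the bound must be verified along the whole homotopy $x=\lambda T(x)$, not only at $\lambda=1$), your argument closes.
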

\begin{proof}
The two identities are obtained almost verbatim as those on the continuous level by appropriate testing of the underlying variational identities.
As a consequence of the second identity and Assumptions~(A1)--(A3), any solution of Problem~\ref{prob:pg}\eqref{eq:pg1}--\eqref{eq:pg4} satisfies uniform a-priori bounds. 
The existence of a solution can then be obtained from the 
Leray-Schauder principle, see \cite[Theorem~6A]{Zeidler1} as follows: In the $n$th time step, we seek to determine $x=(\dt \rho_{h,\tau}(t^{n-1/2}),\dt \eta(t^{n-1/2}),\bar \mu_{\rho,h,\tau}(t^{n-1/2}),\bar\mu_{\eta,h,\tau}(t^{n-1/2}))$; afterwards, we update $\rho_{h,\tau}(t^n)=\rho_{h,\tau}(t^{n-1})+\tau \dt \rho_{h,\tau}(t^{n-1/2})$ and $\eta_{h,\tau}(t^n)=\eta_{h,\tau}(t^{n-1}) + \tau \dt \eta_{h,\tau}(t^{n-1/2})$ and proceed. 
By elementary manipulations, we may cast \eqref{eq:pg1}--\eqref{eq:pg4} into a fixed-point equation $x = T(x)$ in a finite dimensional space $X \simeq \RR^N$. The operator $T$ is continuous and compact. 
With the previous considerations, one can see that any solution of $x= \lambda T(x)$ for $0 \le \lambda \le 1$ is bounded uniformly by $\|x\| \le R$ with constant $R>0$ independent of $\lambda$. 
This then implies existence of a solution to $x=T(x)$ and hence to \eqref{eq:pg1}--\eqref{eq:pg4} in the $n$th time step of Problem~\ref{prob:pg}, and we concluded by induction over $n$.
\end{proof}

\subsection*{Convergence rates}
In order to derive quantitative error estimates, we need to assume sufficient regularity of the solution of \eqref{eq:s1}--\eqref{eq:s2}. 
We thus require that
\begin{itemize}
\item[(A6)] a sufficiently regular solution $(\rho,\eta,\mu_\rho,\mu_\eta)$ of \eqref{eq:1}--\eqref{eq:4} exists, satisfying
\begin{alignat*}{2}
\qquad \quad
\rho&\in H^{2}(0,T;H^1(\Omega))\cap H^1(0,T;H^3(\Omega)),\\
\eta&\in H^{2}(0,T;H^1(\Omega))\cap H^1(0,T;H^3(\Omega)), \\
\mu_\rho& \in H^2(0,T;H^1(\Omega))\cap  L^2(0,T;H^{3}(\Omega))\cap L^\infty(0,T;W^{1,\infty}(\Omega)),
\\
\mu_\eta& \in H^2(0,T;L^2(\Omega))\cap  L^2(0,T;H^{2}(\Omega))\cap L^\infty(0,T;L^\infty(\Omega)).
\end{alignat*}
\end{itemize}
All norms of solution components and their derivatives arising in these spaces are thus assumed bounded uniformly. These assumptions are required to ensure sufficient approximation properties of the underlying discretization spaces in space and time, and they allow us to prove our second and main result.
\begin{theorem}[Convergence rates and uniqueness]
\label{thm:fulldisk} $ $\\
Let (A0)--(A6) hold, $0<h,\tau \le c$ be sufficiently small, and let the initial values be chosen according to $\rho_{h,0} = \pi_h^1 \rho(0)$, $\eta_{h,0}=\pi_h^1 \eta(0)$. 
Then the solution of Problem~\ref{prob:pg} satisfies
\begin{align}\label{eq:con_res_1}
\|\rho - \rho_{h,\tau}\|_{L^\infty(H^1)} &+ \|\eta - \eta_{h,\tau}\|_{L^\infty(H^1)} \\
&+ \|\bar\mu_\rho - \bmu_{\rho,h,\tau}\|_{L^2(H^1)} + \|\bar\mu_\eta - \bmu_{\eta,h,\tau}\|_{L^2(L^2)}
\leq C \, (h^2 + \tau^2),\notag
\end{align}
with a constant $C$ depending only the bounds in the assumptions, but not on $h$ and $\tau$.
Let us recall that $\bar \mu_\rho = \bar \pi_\tau^0 \mu_\rho, \bar \mu_\eta = \bar \pi_\tau^0 \mu_\eta$  denote the piecewise constant averages in time.
For the choice $\tau=c' h$ and $h,\tau \le c$ sufficiently small, the discrete solution is unique.
\end{theorem}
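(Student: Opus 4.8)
The plan is to prove \eqref{eq:con_res_1} by a relative energy argument, and then deduce uniqueness from a discrete stability estimate under the coupling $\tau=c'h$. First I would introduce interpolants of the exact solution that are compatible with the discrete spaces, namely $\hat\rho=\I_\tau^1\pi_h^1\rho$, $\hat\eta=\I_\tau^1\pi_h^1\eta$ for the phase fields and the piecewise-constant-in-time projections $\hat\mu_\rho=\bar\pi_\tau^0\pi_h^1\mu_\rho$, $\hat\mu_\eta=\bar\pi_\tau^0\pi_h^0\mu_\eta$ for the potentials. The error is split as $\rho-\rho_{h,\tau}=(\rho-\hat\rho)+(\hat\rho-\rho_{h,\tau})$, and analogously for the other components. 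The interpolation parts $\rho-\hat\rho$, $\eta-\hat\eta$, $\bar\mu_\rho-\hat\mu_\rho$, $\bar\mu_\eta-\hat\mu_\eta$ are bounded directly by the approximation results of Appendices~\ref{asec:space}--\ref{asec:time} combined with the regularity (A6), which already yields the claimed $O(h^2+\tau^2)$ rate for this contribution. The remaining discrete errors $e_\rho=\hat\rho-\rho_{h,\tau}$, $e_\eta=\hat\eta-\eta_{h,\tau}$ are then the quantities to be controlled by energy arguments.

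Second, I would derive a discrete relative-energy identity for the Bregman distance $\E(\rho_{h,\tau},\eta_{h,\tau}\,|\,\hat\rho,\hat\eta)$ of the energy $\E$ evaluated at the discrete solution relative to the interpolant. Testing \eqref{eq:pg1}--\eqref{eq:pg4} with the discrete errors and subtracting the continuous identities \eqref{eq:1}--\eqref{eq:4} tested against the same functions produces, over each interval $I_n$, a relation balancing the rate of change of the relative energy plus the discrete dissipation against consistency residuals, mobility-difference terms, and potential Bregman terms. The dissipation is nonnegative by (A2) and is kept on the left. By strict convexity of the shifted potential in (A3), the relative energy is coercive, $\E(\rho_{h,\tau},\eta_{h,\tau}\,|\,\hat\rho,\hat\eta)\ge c\,(\norm{\na e_\rho}^2+\norm{\na e_\eta}^2)-C\,(\norm{e_\rho}^2+\norm{e_\eta}^2)$, so that controlling the relative energy and the dissipation delivers the $L^\infty(H^1)$ control of $e_\rho,e_\eta$ and the $L^2$ control of the potential errors demanded by \eqref{eq:con_res_1}.

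Third comes the estimation of the right-hand-side terms. The consistency residuals are bounded by the interpolation errors, hence by $O(h^2+\tau^2)$ via (A6); the second order in time here hinges on exploiting the midpoint/averaged-vector-field structure of the scheme, so that no spurious $O(\tau)$ term survives. The mobility-difference terms are the delicate ones: since $\LL$ depends on $\omega=(\rho,\eta,\na\rho,\na\eta)$, their differences involve $\na e_\rho,\na e_\eta$—precisely the $H^1$ error quantities being estimated—multiplied by $\na\bar\mu_\rho$ and $\bar\mu_\eta$, which are bounded by (A6). Using the $C^2$-Lipschitz bounds of (A2), the growth bounds of (A3), and the a-priori energy bounds of Theorem~\ref{thm:well-posed}, these terms are split into a small multiple of the dissipation, absorbed on the left, plus terms proportional to the relative energy. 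A discrete Gronwall inequality then closes the estimate, yielding $\E(\rho_{h,\tau},\eta_{h,\tau}\,|\,\hat\rho,\hat\eta)(t^n)+\int_0^{t^n}\D\,ds\lesssim(h^2+\tau^2)^2$; taking square roots and combining with the interpolation error gives \eqref{eq:con_res_1}.

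Finally, for uniqueness under $\tau=c'h$: the error estimate already furnishes a uniform $W^{1,\infty}$ bound on the discrete solution, since it lies within $O(h^2+\tau^2)$ of the smooth exact solution. Given two discrete solutions with identical data, their difference solves a linear system, and an energy estimate for this difference—in which the discrete norms entering the nonlinear coefficients are controlled by inverse inequalities—produces a Gronwall bound whose constant becomes harmless once $\tau\sim h$ is small, forcing the difference to vanish. I expect the main obstacle to be exactly the gradient-dependent mobility differences in step three: because they reintroduce the $H^1$ error on the right-hand side, the argument only closes after absorbing them into the dissipation, which in turn requires a bootstrap (continuation) argument establishing that the discrete solution remains in a neighbourhood of the exact one where the coercivity and inverse-estimate constants are uniform, and it is precisely this bootstrap that ties the analysis to the smallness of $h,\tau$ and to the coupling $\tau=c'h$.
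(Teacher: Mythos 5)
Your proposal is correct and follows essentially the same route as the paper: the same projection-based error splitting ($\I_\tau^1\pi_h^1$ for the phase fields, $\bar\pi_\tau^0$ composed with a spatial projection for the potentials), the same relative-energy/Bregman stability estimate with the mobility differences absorbed into the dissipation, a discrete Gronwall argument yielding $\E_\alpha + \int\D \lesssim (h^2+\tau^2)^2$, and the same linearization-around-one-solution argument for uniqueness, where inverse inequalities and $\tau = c'h$ give the needed $L^\infty(W^{1,\infty})$ control of the discrete potentials. The only cosmetic deviations are your use of $\pi_h^1$ instead of $\pi_h^0$ for $\mu_\rho$ and your anticipated bootstrap/continuation argument, which the paper avoids for the convergence part by evaluating the mobility in the perturbed system at the discrete solution itself, so that smallness of $h,\tau$ enters only through the Gronwall condition and the coupling $\tau = c'h$ is needed solely for uniqueness.
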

The key ingredient for the proof of these results is a careful stability estimate for the discrete problem, see Theorem~\ref{thm:stab}, which is derived via relative energy estimates. 
%

\section{Discrete stability}\label{sec:stab}

In our convergence analysis, we will compare discrete solutions of Problem~\ref{prob:pg} with auxiliary functions $(\hat\rho_{h,\tau},\hat\eta_{h,\tau},\hbmu_{\rho,h,\tau},\hbmu_{\eta,h,\tau})$ that are obtained as certain projections of the true solution. When inserting such functions into the discrete variational problem, we obtain
\begin{align}
\la \dt\hat\rho_{h,\tau},\bar v_{1,h,\tau} \ra^n + \la \bLL_{11,h,\tau}\nabla\hbmu_{\rho,h,\tau}+\hbmu_{\eta,h,\tau}\bLL_{12,h,\tau},\nabla\bar v_{1,h,\tau} \ra^n  &= \la \bar r_{1,h,\tau},\bar v_{1,h,\tau} \ra^n \label{eq:ppg1}\\
\la \dt\hat\eta_{h,\tau},\bar v_{2,h,\tau} \ra^n +  \la\bLL_{12,h,\tau}\nabla\hbmu_{\rho,h,\tau} + \bLL_{22,h,\tau}\hbmu_{\eta,h,\tau},\bar v_{2,h,\tau} \ra^n &= \la \bar r_{2,h,\tau},\bar v_{2,h,\tau} \ra^n \label{eq:ppg2}\\
 \la \hbmu_{\rho,h,\tau},\bar w_{1,h,\tau} \ra^n - \gamma_\rho\la \nabla\hbrho_{h,\tau},\nabla \bar w_{1,h,\tau} \ra^n - \la f_\rho(\hat\rho_{h,\tau},\hat\eta_{h,\tau}),\bar w_{1,h,\tau} \ra^n &= \la \bar r_{3,h,\tau},\bar w_{1,h,\tau} \ra^n \label{eq:ppg3}\\
\la \hbmu_{\eta,h,\tau},\bar w_{2,h,\tau} \ra^n - \gamma_\eta\la \nabla\hbeta_{h,\tau},\nabla \bar w_{2,h,\tau} \ra^n - \la f_\eta(\hat\rho_{h,\tau},\hat\eta_{h,\tau}),\bar w_{2,h,\tau} \ra^n &= \la \bar r_{4,h,\tau},\bar w_{2,h,\tau} \ra^n \label{eq:ppg4}    
\end{align}
for all $\bar v_{1,h,\tau},\bar v_{2,h,\tau},\bar w_{1,h,\tau},\bar w_{2,h,\tau}\in P_0(I_n;\Vh)$ and $n \le N$ with residuals $\bar r_{i,h,\tau} \in P_0(I_\tau;\Vh)$.
\begin{remark}
Let us emphasize that we use the mobility matrix $\bar \LL_{h,\tau}=\LL(\bar \omega_{h,\tau})$ evaluated at the exact discrete solution $\omega_{h,\tau}=(\rho_{h,\tau},\eta_{h,\tau},\nabla \rho_{h,\tau},\nabla \eta_{h,\tau})$ of Problem~\ref{prob:pg} here, which can be understood as a linearization around the discrete solution.
\end{remark}

\subsection{Relative energy}
To measure the difference of two pairs of functions $(\rho,\eta)$ and $(\hat \rho,\hat \eta)$, we use the regularized relative energy 
\begin{align*}
    \mathcal{E}_\alpha(\rho,\eta|\hat\rho,\hat\eta) := \int_\Omega &\frac{\gamma_\rho}{2}\snorm{\nabla(\rho-\hat\rho)}^2 + \frac{\gamma_\eta}{2}\snorm{\nabla(\eta-\hat\eta)}^2 + \frac{\alpha}{2}\snorm{\rho-\hat\rho}^2 + \frac{\alpha}{2}\snorm{\eta-\hat\eta}^2\\
    &+ f(\rho,\eta) - f(\hat\rho,\hat\eta) - f_\rho(\hat\rho,\hat\eta)(\rho-\hat\rho) - f_\eta(\hat\rho,\hat\eta)(\eta-\hat\eta).
\end{align*}
Differences in the chemical potentials $(\mu_\rho,\mu_\eta)$ and $(\hat \mu_\rho,\hat \mu_\eta)$ can be estimated by the dissipation functional $\D_{\rho,\eta}(\mu_\rho-\hat \mu_\rho,\mu_\eta-\hat\mu_\eta)$. 
Due to our assumptions (A1)--(A3), these functionals allow us to estimate the distance between solutions. \changestwo{We recall the notation of $f_\rho(\hat\rho,\hat\eta), f_\eta(\hat\rho,\hat\eta)$ as partial derivatives of $f$ after the first and second argument evaluated at $\hat\rho,\hat\eta.$}

\begin{lemma}\label{lem:properties}
Let (A1)--(A3) hold. Then 
there exists $c_\alpha$, $C_\alpha$, $C_L >0$, such that 
\begin{align*}
  c_\alpha (\norm{\rho-\hat\rho}_{H^1}^2 + \norm{\eta-\hat\eta}_{H^1}^2)  &\leq \E_\alpha(\rho,\eta|\hat\rho,\hat\eta ) \le C_\alpha (\norm{\rho-\hat\rho}_{H^1}^2 + \norm{\eta-\hat\eta}_{H^1}^2), \\
  \norm{\nabla(\mu_\rho-\hat \mu_\rho)}_{L^2}^2 + \norm{\mu_\eta-\hat \mu_\eta}_{L^2}^2 &\leq C_L\D_{\rho,\eta}(\mu_\rho-\hat\mu_\rho,\mu_\eta-\hat\mu_\eta)
\end{align*}
for any choice of $\rho,\eta,\mu_\rho,\hat \rho,\hat \eta,\hat \mu_\rho \in H^1(\Omega)$ and any function $\mu_\eta,\hat\mu_\eta \in L^2(\Omega)$. 
\end{lemma}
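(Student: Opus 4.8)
The plan is to treat the two asserted inequalities separately, the second being essentially immediate from ellipticity and the first resting on the convexity structure of (A3). Throughout I abbreviate $z=(\rho,\hat\eta)$, write $z=(\rho,\eta)$, $\hat z=(\hat\rho,\hat\eta)$, and observe that the potential part of $\E_\alpha(\rho,\eta|\hat\rho,\hat\eta)$,
\begin{align*}
B(z\,|\,\hat z)
&:=\tfrac{\alpha}{2}\snorm{\rho-\hat\rho}^2+\tfrac{\alpha}{2}\snorm{\eta-\hat\eta}^2\\
&\quad+f(\rho,\eta)-f(\hat\rho,\hat\eta)-f_\rho(\hat\rho,\hat\eta)(\rho-\hat\rho)-f_\eta(\hat\rho,\hat\eta)(\eta-\hat\eta),
\end{align*}
is precisely the first-order Taylor remainder (Bregman divergence) of the shifted potential $g(z):=f(z)+\tfrac{\alpha}{2}\snorm{z}^2$ at $\hat z$; a one-line computation absorbs the quadratic terms into the remainder of $g$. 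Hence $\E_\alpha=\tfrac{\gamma_\rho}{2}\norm{\na(\rho-\hat\rho)}_{L^2}^2+\tfrac{\gamma_\eta}{2}\norm{\na(\eta-\hat\eta)}_{L^2}^2+\int_\Omega B(z|\hat z)$, and both bounds reduce to pointwise estimates for $B$.

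For the lower bound I would first note that by (A3) the shift is convex, so $B(z|\hat z)\ge0$ pointwise, which already yields gradient coercivity $\E_\alpha\ge\tfrac12\min(\gamma_\rho,\gamma_\eta)\big(\norm{\na(\rho-\hat\rho)}_{L^2}^2+\norm{\na(\eta-\hat\eta)}_{L^2}^2\big)$. The missing $L^2$-control is the crux and cannot come from convexity alone (a quartic well has vanishing Hessian at its minimum). I therefore split $g=\big(f+\tfrac{\alpha_0}{2}\snorm{\cdot}^2\big)+\tfrac{\alpha-\alpha_0}{2}\snorm{\cdot}^2$, where by (A3) the first summand is convex for a suitable $\alpha_0>0$ and the relative energy is formed with $\alpha>\alpha_0$; the Bregman divergence of the convex summand is nonnegative and the quadratic excess contributes $\tfrac{\alpha-\alpha_0}{2}\snorm{z-\hat z}^2$, so that $B(z|\hat z)\ge\tfrac{\alpha-\alpha_0}{2}\big(\snorm{\rho-\hat\rho}^2+\snorm{\eta-\hat\eta}^2\big)$. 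Integrating and combining with the gradient part gives the claim with $c_\alpha=\tfrac12\min(\gamma_\rho,\gamma_\eta,\alpha-\alpha_0)$. This upgrade from strict to strong convexity via a slight increase of the shift is the main obstacle and the only place where positivity of $c_\alpha$ is genuinely used.

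For the upper bound I expand the nonquadratic part by Taylor, $\mathcal B_f=\tfrac12(z-\hat z)^\top D^2 f(\xi)(z-\hat z)$ with $\xi$ on the segment $[\hat z,z]$, and invoke the growth bound in (A3), $\snorm{D^2 f(\xi)}\le C(1+\snorm{\rho}^2+\snorm{\eta}^2+\snorm{\hat\rho}^2+\snorm{\hat\eta}^2)$, while the residual term $\tfrac{\alpha}{2}\snorm{z-\hat z}^2$ is bounded trivially in $L^2$. Integrating and using H\"older together with the Sobolev embedding $H^1(\Omega)\hookrightarrow L^6(\Omega)$ (valid for $d\le3$) yields $\int_\Omega B\le C\,\big(1+\norm{\rho}_{H^1}^2+\norm{\eta}_{H^1}^2+\norm{\hat\rho}_{H^1}^2+\norm{\hat\eta}_{H^1}^2\big)\big(\norm{\rho-\hat\rho}_{H^1}^2+\norm{\eta-\hat\eta}_{H^1}^2\big)$; the gradient terms are controlled by $\tfrac12\max(\gamma_\rho,\gamma_\eta)$ times the same seminorms. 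Thus $C_\alpha$ is finite but depends on an a-priori $H^1$-bound for the four arguments—a dependence forced by the superquadratic Hessian growth permitted in (A3); in the application this bound is supplied by the discrete energy identity of Theorem~\ref{thm:well-posed} and the regularity (A6), so $C_\alpha$ is effectively uniform there.

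Finally, the dissipation estimate is immediate from ellipticity. Setting $\xi=(\na(\mu_\rho-\hat\mu_\rho),\,\mu_\eta-\hat\mu_\eta)$, which lies in $L^2(\Omega)^{d+1}$ exactly under the stated regularity $\mu_\rho,\hat\mu_\rho\in H^1$, $\mu_\eta,\hat\mu_\eta\in L^2$, the lower ellipticity bound in (A2) gives $\xi^\top\LL(\omega)\xi\ge\lambda_1\snorm{\xi}^2=\lambda_1\big(\snorm{\na(\mu_\rho-\hat\mu_\rho)}^2+\snorm{\mu_\eta-\hat\mu_\eta}^2\big)$ with $\omega=(\rho,\eta,\na\rho,\na\eta)$; integrating over $\Omega$ yields $\D_{\rho,\eta}(\mu_\rho-\hat\mu_\rho,\mu_\eta-\hat\mu_\eta)\ge\lambda_1\big(\norm{\na(\mu_\rho-\hat\mu_\rho)}_{L^2}^2+\norm{\mu_\eta-\hat\mu_\eta}_{L^2}^2\big)$, i.e.\ the second inequality with $C_L=1/\lambda_1$.
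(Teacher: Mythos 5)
Your proof is correct and follows essentially the same route as the paper's own (two-sentence) proof: the first inequality from the Bregman-divergence structure of the relative energy together with (A1) and (A3), the second directly from the ellipticity bound in (A2) with $C_L = 1/\lambda_1$. Where the paper merely cites the assumptions, your writeup makes explicit two points it glosses over — that the $L^2$ part of the lower bound needs (A3) read as convexity of $f + \tfrac{\alpha_0}{2}\snorm{\cdot}^2$ at some shift $\alpha_0 < \alpha$ (i.e.\ strong, not merely strict, convexity of the shifted potential), and that the quartic growth permitted in (A3) forces $C_\alpha$ to depend on $H^1$ bounds of the four arguments (via $H^1 \hookrightarrow L^6$), which is harmless since such bounds are available where the lemma is applied.
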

\begin{proof}
The first inequality follows from the construction of the relative energy and assumptions (A1) and (A3). The second inequality follows directly from (A2).
\end{proof}

\subsection{Relative energy estimate}
The following stability estimate allows us to estimate differences of discrete solutions in terms of the residuals in \eqref{eq:ppg1}--\eqref{eq:ppg4}.  
\begin{theorem}
\label{thm:stab}
Let (A0)--(A5) hold, let $(\rho_{h,\tau},\bar\mu_{\rho,h,\tau},\eta_{h,\tau},\bar\mu_{\eta,h,\tau})$ be a solution of Problem~\ref{prob:pg}, and $(\hat\rho_{h,\tau},\hbmu_{\rho,h,\tau},\hat\eta_{h,\tau},\hbmu_{\eta,h,\tau})$ solve \eqref{eq:ppg1}--\eqref{eq:ppg4} with residuals $\bar r_{i,h,\tau} \in P_0(I_\tau;\Vh)$. Then 
\begin{align*}
\E_\alpha(\rho_{h,\tau},\eta_{h,\tau}&|\hat\rho_{h,\tau},\hat\eta_{h,\tau})\Big\vert_{t^{n-1}}^{t^n} + C_1 \int_{I_n} \D_{\bar\rho_{h,\tau},\bar\eta_{h,\tau}}(\mu_{\rho,h,\tau}-\hat\mu_{\rho,h,\tau},\mu_{\eta,h,\tau}-\hat\mu_{\eta,h,\tau}) \, ds \\
&\leq C_2\int_{I_n}\E_\alpha(\rho_{h,\tau},\eta_{h,\tau}|\hat\rho_{h,\tau},\hat\eta_{h,\tau})\, ds \\
&\qquad \qquad \qquad + C_3\int_{I_n}\norm{\bar r_{1,h,\tau}}_{H^{-1}}^2 + \norm{\bar r_{2,h,\tau}}_{H^1}^2  + \norm{\bar r_{3,h,\tau}}_{L^2}^2 + \norm{\bar r_{4,h,\tau}}_{L^2}^2 \, ds
\end{align*}
with positive constants $C_1$, $C_2$, $C_3$ depending only on the bounds in the assumptions. 
\end{theorem}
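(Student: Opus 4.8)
The plan is to derive the stated one-step estimate as a discrete relative-energy identity, obtained by subtracting \eqref{eq:ppg1}--\eqref{eq:ppg4} from \eqref{eq:pg1}--\eqref{eq:pg4} and testing the resulting error equations in the order dictated by the gradient-flow structure. Write $e_\rho=\rho_{h,\tau}-\hat\rho_{h,\tau}$, $e_\eta=\eta_{h,\tau}-\hat\eta_{h,\tau}$, $e_{\mu_\rho}=\bar\mu_{\rho,h,\tau}-\hbmu_{\rho,h,\tau}$, $e_{\mu_\eta}=\bar\mu_{\eta,h,\tau}-\hbmu_{\eta,h,\tau}$. The decisive structural simplification is that, by the remark following \eqref{eq:ppg4}, the mobility is frozen at the discrete solution $\bar\omega_{h,\tau}$ in both problems; hence the flux differences are \emph{linear} in $e_{\mu_\rho},e_{\mu_\eta}$, no mobility-difference terms arise, and the error equations inherit exactly the structure of Problem~\ref{prob:pg} with sources $-\bar r_{i,h,\tau}$.

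First I would write $\E_\alpha\vert_{t^{n-1}}^{t^n}=\int_{I_n}\tfrac{d}{dt}\E_\alpha\,ds$. Since $e_\rho,e_\eta$ are piecewise linear and $\dt e_\rho,\dt e_\eta$ piecewise constant in time, integration over $I_n$ turns the quadratic part of $\tfrac{d}{dt}\E_\alpha$ into the projected pairings $\gamma_\rho\la\na\bar e_\rho,\na\dt e_\rho\ra^n$, $\alpha\la\bar e_\rho,\dt e_\rho\ra^n$, and their $\eta$-analogues. Testing the error versions of \eqref{eq:pg3}--\eqref{eq:pg4} with $\dt e_\rho,\dt e_\eta$ replaces the gradient contribution by $\la e_{\mu_\rho},\dt e_\rho\ra^n+\la e_{\mu_\eta},\dt e_\eta\ra^n$ plus the pairings with $\bar r_3,\bar r_4$, while the chain rule lets the nonlinear main terms $\la f_\rho(\rho_{h,\tau},\eta_{h,\tau})-f_\rho(\hat\rho_{h,\tau},\hat\eta_{h,\tau}),\dt\rho_{h,\tau}\ra^n$ collapse up to a remainder carried by $\dt\hat\rho_{h,\tau},\dt\hat\eta_{h,\tau}$. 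Testing the error versions of \eqref{eq:pg1}--\eqref{eq:pg2} with $e_{\mu_\rho},e_{\mu_\eta}$ and using symmetry of $\LL$ then converts $\la e_{\mu_\rho},\dt e_\rho\ra^n+\la e_{\mu_\eta},\dt e_\eta\ra^n$ into $-\int_{I_n}\D_{\bar\rho_{h,\tau},\bar\eta_{h,\tau}}(e_{\mu_\rho},e_{\mu_\eta})\,ds$ modulo the pairings with $\bar r_1,\bar r_2$. Collecting everything yields an identity for $\E_\alpha\vert_{t^{n-1}}^{t^n}+\int_{I_n}\D$ whose right-hand side consists of the four residual pairings, the $\alpha$-cross terms in $\dt e_\rho,\dt e_\eta$, and a purely nonlinear remainder.

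A genuinely important step is the treatment of this nonlinear remainder. Expanding $f_\rho(\rho_{h,\tau},\eta_{h,\tau})-f_\rho(\hat\rho_{h,\tau},\hat\eta_{h,\tau})$ by Taylor around $(\hat\rho_{h,\tau},\hat\eta_{h,\tau})$ and combining with the second-order terms produced by differentiating the linearization part of $\E_\alpha$, the leading contributions linear in $(e_\rho,e_\eta)$ cancel exactly by the symmetry $f_{\rho\eta}=f_{\eta\rho}$, and only the genuinely quadratic Taylor remainder survives. This surplus is bounded by $C\,\E_\alpha$ pointwise in time using the growth bounds and strict convexity of (A3), the $H^1$-equivalence of $\E_\alpha$ from Lemma~\ref{lem:properties}, the embedding $H^1\hookrightarrow L^6$, and the uniform $L^\infty$-bounds of $\hat\rho_{h,\tau},\hat\eta_{h,\tau}$ and $\dt\hat\rho_{h,\tau},\dt\hat\eta_{h,\tau}$; integrating over $I_n$ it produces the $C_2\int_{I_n}\E_\alpha$ term. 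The pairings $\la\bar r_1,e_{\mu_\rho}\ra^n$ and $\la\bar r_2,e_{\mu_\eta}\ra^n$ are handled by duality and the second inequality of Lemma~\ref{lem:properties}; for the conserved potential one exploits that $e_{\mu_\rho}$ is determined only up to its mean, which is recovered from the error version of \eqref{eq:pg3} tested with constants, so that $\na e_{\mu_\rho}$ (controlled by $\D$) together with a Poincaré estimate suffices. The $\alpha$-cross terms and the pairings with $\bar r_3,\bar r_4$ are re-expressed through the error versions of \eqref{eq:pg1}--\eqref{eq:pg2}, and Young's inequality distributes everything into an absorbable $\eps\,\D$, a multiple of $\E_\alpha$, and the residual norms.

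The hard part is precisely this last re-expression while keeping the constants independent of $h$ and $\tau$. The factors $\dt e_\rho$ and $\dt e_\eta$ are controlled neither by $\E_\alpha$ nor by $\D$; they are accessible only through the evolution equations, and the conserved and non-conserved components behave asymmetrically. The non-conserved equation \eqref{eq:pg2} has $L^2$-type test structure and yields direct $L^2$-control of $\dt e_\eta$ (up to $\sqrt{\D}$ and a residual), whereas the divergence-form equation \eqref{eq:pg1} yields only a dual control of $\dt e_\rho$ and forces one to pass the gradient onto the paired function. Consequently each residual must be routed through the evolution equation that supplies the matching norm, and it is this bookkeeping — together with the gauge (mean-free) handling of $e_{\mu_\rho}$ — that determines the precise residual norms appearing on the right-hand side and is the main obstacle of the proof. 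The per-step inequality then follows after absorbing all $\eps\,\D$ contributions into the dissipation term on the left.
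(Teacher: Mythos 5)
Your proposal is correct and follows essentially the same route as the paper's own proof: differentiating the relative energy over $I_n$, testing the potential equations \eqref{eq:pg3}--\eqref{eq:pg4}/\eqref{eq:ppg3}--\eqref{eq:ppg4} with $\dt e_\rho,\dt e_\eta$ and the evolution equations \eqref{eq:pg1}--\eqref{eq:pg2}/\eqref{eq:ppg1}--\eqref{eq:ppg2} with the chemical-potential errors (and with $e_\rho,e_\eta$ for the $\alpha$-terms), bounding the quadratic Taylor remainder of $f$ by $C\,\E_\alpha$ via (A3) and the a priori $L^\infty(H^1)$ bounds, recovering the mean of $e_{\mu_\rho}$ from the potential equation tested with constants before applying Poincar\'e, and absorbing the Young-inequality terms into the dissipation. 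The only (immaterial) difference is bookkeeping: the paper folds the residuals $\bar r_3,\bar r_4$ directly into the test functions for the evolution equations, whereas you keep the residual pairings separate.
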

\begin{proof}
For ease of presentation, we omit the subscripts $h,\tau$ in the notation for the discrete functions. 
By differentiation of the relative energy in time, we then get
\begin{align*}
\E(\rho,\eta|\hat\rho,\hat\eta)\vert_{t^{n-1}}^{t^n} &= \int_{I_n} \tfrac{d}{ds}\E(\rho,\eta|\hat\rho,\hat\eta) \, ds\\
&= \gamma_\rho\la \na(\rho-\hat\rho),\na \dt(\rho-\hat\rho) \ra^n  + \la f_\rho(\rho,\eta)- f_\rho(\hat\rho,\hat\eta),\dt(\rho-\hat\rho) \ra^n \\
&\qquad +\gamma_\eta\la \na(\eta-\hat\eta),\na \dt(\eta-\hat\eta) \ra^n  + \la f_\eta(\rho,\eta)- f_\eta(\hat\rho,\hat\eta),\dt(\eta-\hat\eta) \ra^n \\
&\qquad + \la f_\rho(\rho,\eta) - f_\rho(\hat\rho,\hat\eta) - f_{\rho\rho}(\hat\rho,\hat\eta)(\rho-\hat\rho) - f_{\rho\eta}(\hat\rho,\hat\eta)(\eta-\hat\eta), \dt\hat\rho\ra^n \\
&\qquad + \la f_\eta(\rho,\eta) - f_\eta(\hat\rho,\hat\eta) - f_{\eta\rho}(\rho,\eta)(\rho-\hat\rho) - f_{\eta\eta}(\hat\rho,\hat\eta)(\eta-\hat\eta), \dt\hat\eta\ra^n \\
&\qquad +\alpha\la \rho-\hat\rho, \dt(\rho-\hat\rho)\ra^n + \alpha\la \eta-\hat\eta, \dt(\eta-\hat\eta)\ra^n \\
& = (i) + (ii) + (iii) + (iv) + (v).
\end{align*}
The individual terms can now be estimated separately. 
By using first $\bar v_{1,h,\tau}=\dt(\rho-\hat\rho)$ as a test function in the variational identities \eqref{eq:pg3} and \eqref{eq:ppg3}, and then
$\bar w_{1,h,\tau}=\bar\mu_\rho-\hbmu_\rho-\bar r_2$ in \eqref{eq:pg1} and \eqref{eq:ppg1},
and recalling that we omit the subscript $h,\tau$ here, 
we see that 
\begin{align*}
(i) &= \la \bar\mu_\rho-\hbmu_\rho+\bar r_3,\dt(\rho-\hat\rho) \ra^n  \\
    &= - \la \bLL_{11}\nabla(\bar\mu_\rho-\hbmu_\rho),\nabla(\bar\mu_\rho-\hbmu_\rho-\bar r_3) \ra^n \\
    &\quad \;\! - \la (\bar\mu_{\eta}-\hbmu_{\eta})\bLL_{12},\nabla(\bar\mu_\rho-\hbmu_\rho-\bar r_{3}) \ra^n + \la \bar r_1,\bar\mu_\rho-\hbmu_\rho-\bar r_3 \ra^n.
\end{align*}
Here and below, $\bar \LL=\LL(\bar \omega)$ denotes the mobility matrix evaluated at the discrete solution $\omega=(\rho,\eta,\nabla \rho,\nabla \eta)$; the subscripts $h,\tau$ are omitted. 
%
%
With the same arguments, i.e. using $\bar w_{2,h,\tau}=\bar\mu_\eta-\hbmu_\eta-\bar r_4$ in  \eqref{eq:pg4} \& \eqref{eq:ppg4} as well as $\bar v_{2,h,\tau}=\dt(\eta-\hat\eta)$ in \eqref{eq:pg2} \& \eqref{eq:ppg2} we see that
\begin{align*}
(ii) &= - \la (\bar\mu_\eta-\hbmu_{\eta}+\bar r_{4})\bLL_{12},\nabla(\bar \mu_\rho-\hbmu_\rho) \ra^n \\
&\quad \;\! - \la \bLL_{22}(\bar\mu_{\eta}-\hbmu_{\eta}),\bar\mu_{\eta}-\hbmu_{\eta}+\bar r_4 \ra^n  + \la \bar r_{2},\bar\mu_{\eta}-\hbmu_{\eta} + r_{4} \ra^n.
\end{align*}
%
When combining the two expressions, we arrive at 
\begin{align*}
(i)+(ii) &= -\la \begin{pmatrix} \nabla(\bar\mu_{\rho}-\hbmu_{\rho}) \\ \bar\mu_{\eta}-\hbmu_{\eta}\end{pmatrix} + \begin{pmatrix} \nabla \bar r_{2} \\ \bar r_{4} \end{pmatrix},\bLL \begin{pmatrix} \nabla(\bar\mu_{\rho}-\hbmu_{\rho}) \\ \bar\mu_{\eta}-\hbmu_{\eta}\end{pmatrix}\ra^n \\
&\qquad \qquad \qquad  + \la \bar r_{1},\bar\mu_{\rho}-\hbmu_{\rho}-\bar r_{2} \ra^n + \la \bar r_{3},\bar\mu_{\eta}-\hbmu_{\eta} + \bar r_{4} \ra^n \\
&\le  \int\nolimits_{I_n} C(\LL,\delta) (\|\bar r_1\|_{H^{-1}}^2 + \|\bar r_2\|^2_{L^2} + \|\bar r_3\|^2_{H^1}) + \|\bar r_4\|_{L^2}^2 \,  \\
&\qquad \qquad -(1-\delta) \D_{\rho,\eta}(\bar\mu_{\rho}-\hbmu_{\rho},\bar\mu_{\eta}-\hbmu_{\eta}) + \la \bar\mu_{\rho}-\hbmu_{\rho},1\ra^{2}\, ds.
\end{align*}
The parameter $\delta$ stems from Young's inequalities and can be chosen as desired. 
The last term in the above estimate comes from the application of a Poincar\'e inequality and can be further bounded by
\begin{align}
\int_{I_n}\la \bar\mu_{\rho}-\hbmu_{\rho},1\ra^{2} \, ds
&= \int_{I_n}\la f_\rho(\rho,\eta) - f_\rho(\hat\rho,\hat\eta) - \bar r_{2}, 1\ra^{2} \, ds  \notag\\ 
& \leq \int_{I_n} c_2\E(\rho,\eta|\hat\rho,\hat\eta) + c_3\norm{\bar r_{2}}_{L^2}^2 \, ds\label{eq:mean_mu_ref}
\end{align}
In summary, the terms $(i)+(ii)$ can be estimated as required. 
Using the growth assumptions in (A3) and the available a-priori bounds for $\rho,\eta,\hat\rho,\hat\eta$ in $L^\infty(H^1)$, we get
\begin{align*}
(iii) + (iv)  
&= \la f_\rho(\rho,\eta|\hat\rho,\hat\eta),\dt\hat\rho \ra^n + \la f_\eta(\rho,\eta|\hat\rho,\hat\eta),\dt\hat\eta \ra^n  \\
& \leq \int_{I_n}\norm{D^2f_\rho(\xi_1,\xi_2)}_{L^6}\norm{\dt\hat\rho}_{L^2}\E_\alpha(\rho,\eta|\hat\rho,\hat\eta) \\
&\qquad \qquad + \norm{D^2f_\eta(\xi_1,\xi_2)}_{L^6}\norm{\dt\hat\eta}_{L^2}\E_\alpha(\rho,\eta|\hat\rho,\hat\eta) \, ds\\
&\leq c_f\int_{I_n}(\norm{\dt\hat\rho}_{L^2}+\norm{\dt\hat\eta}_{L^2})\E_\alpha(\rho,\eta|\hat\rho,\hat\eta) \, ds.
\end{align*}
The constant $c_f$ only depends on bounds in the assumptions and the initial data.
For the remaining term, we employ the variational identities \eqref{eq:pg1}--\eqref{eq:pg2} and \eqref{eq:ppg1}--\eqref{eq:ppg2} with test functions $\bar  v_{1,h,\tau}=\rho - \hat \rho$ and $\bar v_{2,h,\tau}=\eta - \hat \eta$, respectively, and obtain
\begin{align*}
(v) &= -\alpha\la \begin{pmatrix} \nabla(\bar\mu_{\rho}-\hbmu_{\rho}) \\ \bar\mu_{\eta}-\hbmu_{\eta}\end{pmatrix},\bLL \begin{pmatrix} \nabla(\rho-\hat\rho) \\ \eta-\hat\eta\end{pmatrix} \ra^n  + \alpha \la r_1,\rho-\hat\rho \ra^n + \alpha \la \bar r_{2},\eta-\hat\eta \ra^n \\
& \leq \int_{I_n} \delta \D_{\rho,\eta}(\mu_{\rho}-\hat\mu_{\rho},  \mu_{\eta}-\hat\mu_\eta +  c_5 \E_\alpha(\rho,\eta|\hat\rho,\hat\eta) + c_6 \norm{\bar r_{1}}_{H^{-1}}^2 + c_7 \norm{\bar r_{2}}_{L^2}^2 \, ds.
\end{align*}
By collecting all terms and choosing $\delta$ appropriately, we obtain the assertion.
\end{proof}

\section{Proof of Theorem~\ref{thm:fulldisk}}
\label{sec:err}

Without further mentioning, we assume that $(\rho,\eta,\mu_\rho,\mu_\eta)$ is a sufficiently smooth solution of \eqref{eq:1}--\eqref{eq:4} and denote by $(\rho_{h,\tau},\eta_{h,\tau},\mu_{\rho,h,\tau},\mu_{\eta,h,\tau})$ a corresponding approximation obtained by Problem~\ref{prob:pg} with initial values $\rho_{0,h}=\pi_h^1 \rho(0)$ and $\eta_{h,0}=\pi_h^1 \eta(0)$. 
%

\subsection{Error splitting and projection errors}

Using the spatial and temporal interpolation and projection operators, introduced in Section~\ref{sec:main}, we define the auxiliary functions
\begin{align} \label{eq:fullproj}
\hat \rho_{h,\tau} = \I_\tau^1 \pi_h^1 \rho, \qquad 
\hat \eta_{h,\tau} = \I_\tau^1 \pi_h^1 \eta, \qquad 
\hbmu_{\rho,h,\tau} = \bar \pi_\tau^0 \pi_h^0 \mu_\rho, \qquad 
\hbmu_{\eta,h,\tau} = \bar \pi_\tau^0 \pi_h^0 \mu_\eta.
\end{align}
In the usual manner, we then split the error $\omega-\omega_{h,\tau} = (\omega - \hat \omega_{h,\tau}) + (\hat \omega_{h,\tau}- \omega_{h,\tau})$ into projection and discrete error components, which may be estimated separately.
As a direct consequence of well-known approximation properties of these operators, see Appendix~\ref{asec:space}, one can deduce the following estimates.
\begin{lemma}[Projection error]\label{lem:projerr}
Let (A4)--(A6) hold. Then
\begin{align*}
&\|\hat\rho_{h,\tau} - \rho\|^2_{L^\infty(H^1)} \leq C(\tau^4 + h^4),& 
&\|\hbmu_\rho - \bar\mu_\rho\|^2_{L^2(H^1)} \leq Ch^4\\
&\|\hat\eta_{h,\tau} - \eta\|^2_{L^\infty(H^1)} \leq C(\tau^4 + h^4),& 
&\|\hbmu_\eta - \bar\mu_\eta\|^2_{L^2(L^2)} \leq Ch^4\\
&\|\dt \hat\rho_{h,\tau}-\bar \pi_\tau^0 (\dt \rho)\|^2_{L^2(H^{-1})} \leq C h^4,&
&\|\dt\hat\eta_{h,\tau}-\bar \pi_\tau^0 (\dt \eta)\|^2_{L^2(L^2)} \leq Ch^4.
\end{align*}
The constant $C$ in these estimates only depends on the bounds in the assumptions.
\end{lemma}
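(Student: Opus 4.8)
The plan is to bound each of the six quantities by the triangle inequality, splitting the total projection error into a purely spatial part and a purely temporal part, and then to invoke the standard approximation and stability properties of the four operators $\pi_h^0$, $\pi_h^1$, $\I_\tau^1$, $\bar\pi_\tau^0$ collected in Appendices~\ref{asec:space}--\ref{asec:time}, together with the time-regularity in (A6) and the Bochner--Sobolev embedding $H^1(0,T;X)\hookrightarrow C([0,T];X)$. The spatial ingredients I would use are the second-order energy bounds $\|(\pi_h^1-I)v\|_{H^1}\le Ch^2\|v\|_{H^3}$ and $\|(\pi_h^0-I)v\|_{H^1}\le Ch^2\|v\|_{H^3}$ (the latter using quasi-uniformity from (A4)), together with the $L^2$-bounds $\|(\pi_h^1-I)v\|_{L^2}\le Ch^2\|v\|_{H^2}$ and $\|(\pi_h^0-I)v\|_{L^2}\le Ch^2\|v\|_{H^2}$; the temporal ingredients are the $L^\infty$- and $L^2$-stability of $\bar\pi_\tau^0$ and $\I_\tau^1$ and the interpolation estimate for $\I_\tau^1$.

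I would treat the four estimates that carry only $h^4$ first, exploiting a cancellation of the temporal operator. Since $\hbmu_\rho=\bar\pi_\tau^0\pi_h^0\mu_\rho$ and $\bar\mu_\rho=\bar\pi_\tau^0\mu_\rho$ share the common outer projection, their difference equals $\bar\pi_\tau^0(\pi_h^0-I)\mu_\rho$, and by $L^2$-stability of $\bar\pi_\tau^0$ in time this reduces to the spatial bound $\|(\pi_h^0-I)\mu_\rho\|_{L^2(H^1)}\le Ch^2\|\mu_\rho\|_{L^2(H^3)}$, with the analogous argument for $\hbmu_\eta$ in $L^2(L^2)$ using $\mu_\eta\in L^2(H^2)$. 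For the two derivative terms I would use the key identity $\dt\I_\tau^1 g=\bar\pi_\tau^0(\dt g)$, since the slope of the nodal linear interpolant on $I_n$ is exactly the average of $\dt g$ there, together with the commutation $\dt\pi_h^1=\pi_h^1\dt$. This yields $\dt\hat\rho_{h,\tau}=\bar\pi_\tau^0\pi_h^1(\dt\rho)$, so $\dt\hat\rho_{h,\tau}-\bar\pi_\tau^0(\dt\rho)=\bar\pi_\tau^0(\pi_h^1-I)(\dt\rho)$ is again purely spatial and is controlled in $L^2(H^{-1})$ by $\|(\pi_h^1-I)\dt\rho\|_{L^2(L^2)}\le Ch^2\|\dt\rho\|_{L^2(H^2)}$, using the embedding $L^2\hookrightarrow H^{-1}$ and $\dt\rho\in L^2(H^3)$; the term $\dt\hat\eta_{h,\tau}$ is handled the same way directly in $L^2(L^2)$ via $\dt\eta\in L^2(H^3)$.

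For the two $L^\infty(H^1)$ bounds on $\hat\rho_{h,\tau}-\rho$ and $\hat\eta_{h,\tau}-\eta$ I would split $\hat\rho_{h,\tau}-\rho=\I_\tau^1(\pi_h^1-I)\rho+(\I_\tau^1-I)\rho$. The spatial contribution is dispatched by the $L^\infty$-stability of the nodal interpolant $\I_\tau^1$ in time (its values on each $I_n$ are convex combinations of the two nodal values) followed by $\|(\pi_h^1-I)\rho\|_{L^\infty(H^1)}\le Ch^2\|\rho\|_{L^\infty(H^3)}$, where $\rho\in H^1(0,T;H^3)\hookrightarrow C([0,T];H^3)$ supplies the required $L^\infty$-in-time control. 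The temporal contribution $(\I_\tau^1-I)\rho$ is the linear-interpolation error, to be bounded by $C\tau^2$ through the interpolation estimate of Appendix~\ref{asec:time} applied with the time-regularity $\rho\in H^2(0,T;H^1)$, and identically for $\eta$.

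I expect this last temporal term to be the main obstacle. In the $L^2(0,T;H^1)$ norm the linear-interpolation error is cleanly $O(\tau^2)$ under $H^2(0,T;H^1)$ regularity, but obtaining the full second order in the stronger $L^\infty(0,T;H^1)$ norm is delicate: a direct subinterval estimate combining the $L^2$- and $H^1$-interpolation bounds only yields $O(\tau^{3/2})$, so reaching $O(\tau^2)$ requires the appendix interpolation estimate in its sharp form and exploiting that the interpolation error vanishes at the time nodes. This is the step I would isolate and verify most carefully; once the appropriate approximation properties are in place, the remaining five estimates follow by the cancellation and stability arguments above.
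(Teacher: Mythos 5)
Your proposal follows the route the paper itself intends: the paper's proof is a one-line appeal to the approximation properties collected in the appendix, and your decompositions are precisely what that appeal amounts to --- the cancellation $\hbmu_\rho - \bar\mu_\rho = \bar\pi_\tau^0(\pi_h^0 - I)\mu_\rho$ together with $L^2$-in-time stability of $\bar\pi_\tau^0$, the commutation $\dt \I_\tau^1 \pi_h^1 \rho = \bar\pi_\tau^0 \pi_h^1(\dt\rho)$ which makes $\dt\hat\rho_{h,\tau}-\bar\pi_\tau^0(\dt\rho) = \bar\pi_\tau^0(\pi_h^1 - I)(\dt\rho)$ purely spatial, and the splitting $\hat\rho_{h,\tau}-\rho = \I_\tau^1(\pi_h^1 - I)\rho + (\I_\tau^1 - I)\rho$. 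Your treatment of the four estimates carrying only $h^4$ is complete and correct under (A6).

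On the obstacle you flag for the two $L^\infty(H^1)$ bounds: your instinct is right, but the difficulty is worse than you suggest, and neither of your proposed remedies can close it. Under the literal regularity in (A6), i.e.\ $\rho\in H^2(0,T;H^1)$, the temporal term $(\I_\tau^1-I)\rho$ in $L^\infty(0,T;H^1)$ is only $O(\tau^{3/2})$, and this rate is \emph{sharp}: taking $\dtt\rho$ to be the constant $\tau^{-1/2}$ on a single interval $I_n$ (so $\|\dtt\rho\|_{L^2(I_n)}=1$) gives a midpoint error of exactly $\tau^{3/2}/8$. The vanishing of the interpolation error at the nodes is already built into that bound, so it buys nothing for the continuous-in-time sup norm; the appendix estimate \eqref{eq:timinterpest} is stated only for $p\le q$ and hence cannot be invoked with $p=\infty$, $q=2$; and the extra spatial regularity $\rho\in H^1(0,T;H^3)$ cannot improve a purely temporal estimate. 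In other words, the step you isolated is not a gap you can repair by more careful bookkeeping --- it is an imprecision in the lemma as stated (inherited by the $L^\infty(H^1)$ claim of Theorem~\ref{thm:fulldisk}): the asserted $\tau^4$ for the squared norm requires reading ``sufficiently regular'' in (A6) as including, e.g., $\rho,\eta\in W^{2,\infty}(0,T;H^1)$, or else measuring the error only at the nodes $t^n$, where $\hat\rho_{h,\tau}(t^n)=\pi_h^1\rho(t^n)$ and the temporal contribution vanishes entirely, leaving a purely spatial $O(h^2)$ error. With either of these readings, your argument goes through verbatim; without them, the honest conclusion is $O(\tau^{3/2}+h^2)$ for the $L^\infty(H^1)$ components.
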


\subsection{Residuals}
In order to estimate the discrete error components $\hat \omega_{h,\tau} - \omega_{h,\tau}$, we will employ the stability results of the previous section. 
We start with inserting the projections of the exact solution into the discrete equations and determine the corresponding residuals. 

\begin{lemma} \label{lem:residuals}
Let $(\hat \rho_{h,\tau},\hat \eta_{h,\tau},\hbmu_\rho,\hbmu_\eta)$ be defined as in \eqref{eq:fullproj}. Then \eqref{eq:ppg1}--\eqref{eq:ppg4} holds with 
\begin{align*}
\la \bar r_{1,h,\tau}, \bar v_{1,h,\tau}\ra^n
&:= \la \dt (\pi_h^1 \rho - \rho), \bar v_{1,h,\tau}\ra^n + \la \bLL_{11,h,\tau}\nabla \hbmu_{\rho,h,\tau} - \LL_{11} \nabla \mu_\rho, \nabla \bar v_{1,h,\tau}\ra^n  
\\
& \qquad \qquad \qquad \qquad 
+ \la \bLL_{12,h,\tau}\hbmu_{\eta,h,\tau} - \LL_{12}\mu_\eta, \nabla \bar v_{1,h,\tau}\ra^n  , 
\\
\la \bar r_{2,h,\tau},\bar v_{2,h,\tau} \ra^n 
&:= \la \dt (\pi_h^1 \eta - \eta), \bar v_{2,h,\tau}\ra^n + \la \bLL_{12,h,\tau}\nabla \hbmu_{\rho,h,\tau} - \LL_{12} \nabla \mu_\rho, \bar v_{2,h,\tau}\ra^n 
\\
& \qquad \qquad \qquad \qquad 
+ \la \bLL_{22,h,\tau}\hbmu_{\eta,h,\tau} - \LL_{22}\mu_\eta, \bar v_{2,h,\tau}\ra^n, 
\\
\la  \bar r_{3,h,\tau}, \bar w_{1,h,\tau} \ra^n
&:= \la \hbmu_{\rho,h,\tau} - I_\tau^1 \mu_\rho, \bar w_{1,h,\tau}\ra^n 
      + \gamma \la \nabla (\hat \rho_{h,\tau} - \I_\tau^1 \rho), \nabla \bar w_{1,h,\tau}\ra^n 
\\
& \qquad \qquad \qquad \qquad 
+ \la f_\rho(\hat \rho_{h,\tau},\hat \eta_{h,\tau}) - \I_\tau^1 f_\rho(\rho,\eta), \bar w_{1,h,\tau}\ra^n,
\\
\la  \bar r_{4,h,\tau}, \bar w_{2,h,\tau} \ra^n
&:= \la \hbmu_{\eta,h,\tau} - I_\tau^1 \mu_\eta, \bar w_{2,h,\tau}\ra^n 
      + \gamma \la \nabla (\hat \eta_{h,\tau} - \I_\tau^1 \eta), \nabla \bar w_{2,h,\tau}\ra^n 
\\
& \qquad \qquad \qquad \qquad 
+ \la f_\eta(\hat \rho_{h,\tau},\hat \eta_{h,\tau}) - \I_\tau^1 f_\eta(\rho,\eta), \bar w_{2,h,\tau}\ra^n, 
\end{align*}
As before, $\bar \LL_{h,\tau}=\LL(\bar \omega_{h,\tau})$ and $\LL=\LL(\omega)$ denote the mobility matrices evaluated at the continuous and discrete solutions $\omega_{h,\tau}=(\rho_{h,\tau},\eta_{h,\tau},\nabla \rho_{h,\tau},\nabla \eta_{h,\tau})$ and $\omega=(\rho,\eta,\nabla \rho,\nabla \eta)$.
\end{lemma}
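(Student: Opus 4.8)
The plan is to verify Lemma~\ref{lem:residuals} by a direct substitution argument: I insert the auxiliary functions $(\hat\rho_{h,\tau},\hat\eta_{h,\tau},\hbmu_{\rho,h,\tau},\hbmu_{\eta,h,\tau})$ from \eqref{eq:fullproj} into the left-hand sides of the discrete equations \eqref{eq:ppg1}--\eqref{eq:ppg4} and read off the residuals as the \emph{defect}, i.e. the amount by which the exact-solution projections fail to satisfy the discrete scheme. The key observation is that the true solution $(\rho,\eta,\mu_\rho,\mu_\eta)$ satisfies the continuous variational identities \eqref{eq:1}--\eqref{eq:4} pointwise in time, and therefore also satisfies them after integrating over $I_n$ and restricting the test functions to $\bar v_{i,h,\tau},\bar w_{i,h,\tau}\in P_0(I_n;\Vh)$. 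The residuals are then obtained by adding and subtracting the exact quantities and grouping terms.

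Concretely, for the first residual I would start from \eqref{eq:ppg1} with $(\hat\rho_{h,\tau},\hbmu_{\rho,h,\tau},\hbmu_{\eta,h,\tau})$ substituted in, subtract the continuous identity \eqref{eq:1} tested against $\bar v_{1,h,\tau}$ over $I_n$ (which equals zero), and collect the difference. The time-derivative term yields $\la \dt\hat\rho_{h,\tau}-\dt\rho,\bar v_{1,h,\tau}\ra^n$; here I would use the crucial commutation property that for a piecewise-constant test function $\bar v_{1,h,\tau}$ on $I_n$, the interpolation $\I_\tau^1$ integrates exactly, so $\la \dt\I_\tau^1 w,\bar v\ra^n=\la \dt w,\bar v\ra^n=\la w(t^n)-w(t^{n-1}),\bar v\ra$ and the $L^2$-in-time projection $\bar\pi_\tau^0$ acts trivially against $\bar v$, reducing this term to $\la \dt(\pi_h^1\rho-\rho),\bar v_{1,h,\tau}\ra^n$ as claimed. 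The flux terms produce exactly $\la \bLL_{11,h,\tau}\nabla\hbmu_{\rho,h,\tau}-\LL_{11}\nabla\mu_\rho,\nabla\bar v_{1,h,\tau}\ra^n$ and the $\LL_{12}$ analogue by pairing each discrete mobility/potential product against its continuous counterpart. The same mechanism applied to \eqref{eq:ppg2} versus \eqref{eq:2} gives $\bar r_{2,h,\tau}$.

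For the potential residuals I would subtract \eqref{eq:3}--\eqref{eq:4} from \eqref{eq:ppg3}--\eqref{eq:ppg4}. The term-by-term structure forces the use of $\I_\tau^1\mu_\rho$ and $\I_\tau^1 f_\rho(\rho,\eta)$ rather than the raw exact quantities, which is the natural artifact of testing against time-piecewise-constants: the continuous identity holds pointwise, so integrating against $\bar w_{1,h,\tau}$ reproduces the time average, and rewriting the chemical-potential and nonlinear terms through $\I_\tau^1$ is what makes the residual expression well-defined on $P_0(I_n;\Vh)$. The mass/gradient terms pair $\nabla\hbrho_{h,\tau}$ against $\nabla\I_\tau^1\rho$ to yield the stated $\gamma\la\nabla(\hat\rho_{h,\tau}-\I_\tau^1\rho),\nabla\bar w_{1,h,\tau}\ra^n$, and analogously for $\bar r_{4,h,\tau}$.

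The main obstacle I anticipate is purely bookkeeping rather than conceptual: one must be careful that every term in the continuous identities \eqref{eq:1}--\eqref{eq:4} is split consistently so that the residual definitions depend only on computable projection differences (and hence can later be bounded by Lemma~\ref{lem:projerr} and the approximation estimates of the appendix), while no genuine error is silently absorbed into the mobility linearization. In particular, since the scheme evaluates $\bLL_{h,\tau}=\LL(\bar\omega_{h,\tau})$ at the \emph{discrete} solution but the residual compares against $\LL=\LL(\omega)$ at the exact solution, the mobility-difference contributions $(\bLL_{11,h,\tau}-\LL_{11})\nabla\mu_\rho$ etc.\ must be tracked explicitly; these are precisely the terms that, in the subsequent error analysis, will be controlled through the Lipschitz bounds on $\LL$ in (A2) together with the relative-energy smallness of $\omega-\omega_{h,\tau}$. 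Verifying that the stated residual formulas are exactly the integrated defects is then a matter of matching each term, and I would present it as such without further computation.
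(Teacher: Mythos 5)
Your proposal is correct and follows essentially the same route as the paper's (one-sentence) proof: plug the projections \eqref{eq:fullproj} into \eqref{eq:ppg1}--\eqref{eq:ppg4}, subtract the continuous identities \eqref{eq:1}--\eqref{eq:4} integrated over $I_n$ against piecewise-constant-in-time discrete test functions, and use the commutation property \eqref{eq:commuting} together with $L^2(I_n)$-orthogonality of $\bar\pi_\tau^0$ to reduce the time-derivative terms to $\la \dt(\pi_h^1\rho-\rho),\bar v_{1,h,\tau}\ra^n$. The one step worth stating precisely is why $\I_\tau^1\mu_\rho$, $\I_\tau^1\rho$ and $\I_\tau^1 f_\rho(\rho,\eta)$ may replace the raw quantities in $\bar r_{3,h,\tau}$, $\bar r_{4,h,\tau}$: the identities \eqref{eq:3}--\eqref{eq:4} are linear in the tuple $(\mu_\rho,\rho,f_\rho(\rho,\eta))$, so evaluating them at the nodes $t^{n-1},t^n$ and taking convex combinations shows the interpolated identities hold pointwise on $I_n$, which is the exact identity one subtracts to obtain the stated residual formulas.
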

\begin{proof}
The result follows immediately by plugging the projections into the discrete variational problem \eqref{eq:pg1}--\eqref{eq:pg4} and using some elementary properties of the projections as well as the variational characterization \eqref{eq:1}--\eqref{eq:4} of the true solution. 
\end{proof}

\subsection{Bounds for the residuals}
As a next step, we derive quantitative estimates for the residuals in terms of the discretization parameters.

\begin{lemma}\label{lem:res_est}
Let (A0)--(A6) hold and the residuals $\bar r_{i,h,\tau}$ be defined as in Lemma~\ref{lem:residuals}. Then 
\begin{align*}
 \int_{I_n}  \norm{\bar r_{1,h,\tau}}_{H^{-1}}^2 &+ \norm{\bar r_{2,h,\tau}}_{H^1}^2  + \norm{\bar r_{3,h,\tau}}_{L^2}^2 + \norm{\bar r_{4,h,\tau}}_{L^2}^2  \, ds  \\
 & \leq c\int_{I_n}\E_\alpha(\rho_{h,\tau},\eta_{h,\tau}|\hat\rho_{h,\tau},\hat\eta_{h,\tau}) \, ds + \hat C  (h^4 + \tau^4)
\end{align*}
with constants $c,\hat C$ depending only on the bounds in the assumptions. 
\end{lemma}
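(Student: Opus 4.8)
The plan is to bound every residual termwise, in each $\bar r_{i,h,\tau}$ separating the pure approximation errors from the contributions that carry the mobility evaluated at the discrete solution. By Lemma~\ref{lem:residuals} the residuals $\bar r_{3,h,\tau}$ and $\bar r_{4,h,\tau}$ depend only on the projected exact solution and on $(\rho,\eta,\mu_\rho,\mu_\eta)$; they are genuine consistency errors and carry no relative-energy contribution. I would estimate their $L^2$-norms by $O(h^2+\tau^2)$, splitting each difference (e.g. $\hbmu_{\rho,h,\tau}-I_\tau^1\mu_\rho$ and $\nabla(\hat\rho_{h,\tau}-I_\tau^1\rho)$) into a spatial projection error, controlled by Lemma~\ref{lem:projerr} and the approximation results of Appendix~\ref{asec:space}, and a temporal interpolation error, controlled via Appendix~\ref{asec:time}; the potential terms $f_\rho(\hat\rho_{h,\tau},\hat\eta_{h,\tau})-I_\tau^1 f_\rho(\rho,\eta)$ are treated by a Lipschitz estimate using the growth bounds (A3), the embedding $H^1\hookrightarrow L^6$, and the $L^\infty(H^1)$-bounds on $\hat\rho_{h,\tau},\hat\eta_{h,\tau}$. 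A mechanism I would isolate once and reuse everywhere is that all test functions are piecewise constant in time: since $\int_{I_n}(\bar\pi_\tau^0 u-u)\,ds=0$, the first-order temporal projection errors of $\mu_\rho$ and $\mu_\eta$ integrate to zero against any time-constant weight, so only their spatial $O(h^2)$ parts survive. This is exactly what secures the second order in time.

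For $\bar r_{1,h,\tau}$ and $\bar r_{2,h,\tau}$ the mobility produces the relative-energy term. In every flux I would write
\[
\bLL_{ij,h,\tau}\,q_{h,\tau}-\LL_{ij}\,q = \bLL_{ij,h,\tau}\,(q_{h,\tau}-q) + (\bLL_{ij,h,\tau}-\LL_{ij})\,q,
\]
where $q\in\{\nabla\mu_\rho,\mu_\eta\}$ is the exact quantity and $q_{h,\tau}$ its projection. The first summand is bounded by $\lambda_2$ times a projection error, giving $O(h^2)$ after the temporal-orthogonality argument. The second summand carries the relative energy: by the $C^2$-bound (A2) one has $\snorm{\bLL_{ij,h,\tau}-\LL_{ij}}\le\lambda_3\snorm{\bar\omega_{h,\tau}-\omega}$ pointwise, and writing $\bar\omega_{h,\tau}-\omega=(\bar\omega_{h,\tau}-\bar{\hat\omega}_{h,\tau})+(\bar{\hat\omega}_{h,\tau}-\omega)$ the first part is controlled by $\E_\alpha(\rho_{h,\tau},\eta_{h,\tau}|\hat\rho_{h,\tau},\hat\eta_{h,\tau})$ through Lemma~\ref{lem:properties} and the $L^2$-stability of $\bar\pi_\tau^0$, while the second part is a projection error of order $O(h^2+\tau^2)$. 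Keeping $q$ in $L^\infty$ here is precisely why (A6) assumes $\mu_\rho\in L^\infty(W^{1,\infty})$ and $\mu_\eta\in L^\infty(L^\infty)$.

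For $\bar r_{1,h,\tau}$ these fluxes are tested against $\nabla\bar v_{1,h,\tau}$, so the $H^{-1}$-norm only requires the $L^2$-norms of the fluxes and the estimates above close directly; the remaining term $\dt(\pi_h^1\rho-\rho)$ reduces, again by the time-orthogonality of $\bar\pi_\tau^0$, to $\dt\hat\rho_{h,\tau}-\bar\pi_\tau^0(\dt\rho)$ and is bounded by the $H^{-1}$-estimate of Lemma~\ref{lem:projerr}. Summing the four squared norms over $I_n$ then reproduces the asserted structure $c\int_{I_n}\E_\alpha+\hat C(h^4+\tau^4)$, provided the $H^1$-norm of $\bar r_{2,h,\tau}$ can also be controlled.

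The hard part will be exactly this $H^1$-bound. The residual $\bar r_{2,h,\tau}=\pi_h^0\bar\pi_\tau^0 g_2$ is the $L^2$-projection, piecewise constant in time, of a function $g_2$ that is only element-wise smooth, because through $\LL(\bar\omega_{h,\tau})$ and $\nabla\hbmu_{\rho,h,\tau}$ it contains the discrete gradients $\nabla\rho_{h,\tau},\nabla\eta_{h,\tau}$ and the discontinuous $\nabla\hbmu_{\rho,h,\tau}$, which jump across element faces. I therefore cannot simply differentiate $g_2$, and a crude inverse estimate $\|\nabla\bar r_{2,h,\tau}\|_{L^2}\le Ch^{-1}\|\bar r_{2,h,\tau}\|_{L^2}$ would lose one power of $h$ and destroy the rate. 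The $\dt(\pi_h^1\eta-\eta)$ term is harmless, since it equals the $H^1$-projection error of $\dt\eta$, which lies in $H^1$ with norm $O(h^2)$ by (A6), so that the $H^1$-stability of the $L^2$-projection on the quasi-uniform mesh (A4) applies directly. For the mobility contributions I would combine this $H^1$-stability with the regularity $\mu_\rho\in L^\infty(W^{1,\infty})$ and the $C^2$-bound on $\LL$, arranging the product estimates so that spatial derivatives fall on the smooth exact factors and the genuinely discrete factors ($\nabla\hbmu_{\rho,h,\tau}$ and $\nabla\bar\omega_{h,\tau}$) are handled element-wise and re-expressed through the relative energy. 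Performing this bookkeeping without losing the rate and without invoking an uncontrolled $L^\infty$-norm of the discrete solution is the technical heart of the lemma; once it is in place, collecting the four residual bounds and summing over $I_n$ yields the claim.
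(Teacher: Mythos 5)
Your overall strategy for $\bar r_{1,h,\tau}$ and for the $L^2$-bounds is essentially the paper's proof: splitting each flux as $\bLL_{ij,h,\tau}(q_{h,\tau}-q)+(\bLL_{ij,h,\tau}-\LL_{ij})q$, converting the mobility difference into a relative-energy contribution plus projection errors via the $C^2$-bounds in (A2) and Lemma~\ref{lem:properties}, using (A6) to keep an $L^\infty$-weight on the flux, and exploiting the time-orthogonality of $\bar\pi_\tau^0$ against piecewise-constant test functions for the $O(\tau^2)$ order. The paper packages the temporal superconvergence into Lemma~\ref{lem:average_time_err} and estimate \eqref{eq:midpoint_order_single}, and places the $L^\infty$-weight on the projected flux via the $W^{1,\infty}$-stability \eqref{eq:l2stabw1p} rather than on the exact flux as you do, but these are cosmetic differences.

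The gap is in what you call the technical heart: the $H^1$-bound for $\bar r_{2,h,\tau}$. Your instinct that this bound is out of reach is correct --- $\bar r_{2,h,\tau}$ contains $\LL(\bar\omega_{h,\tau})$ and $\nabla\hbmu_{\rho,h,\tau}$, and no bookkeeping will produce a uniform $H^1$-bound on these elementwise-smooth discrete objects without losing a power of $h$. But the bound is also not needed: the norms attached to $\bar r_2$ and $\bar r_3$ in the lemma statement (and likewise in the statement of Theorem~\ref{thm:stab}) are swapped relative to what the proofs actually use. In the proof of Theorem~\ref{thm:stab}, $\bar r_2$ is only ever paired with $L^2$-functions ($\bar\mu_\eta-\hbmu_\eta+\bar r_4$ and $\eta-\hat\eta$), so only $\norm{\bar r_2}_{L^2}$ enters; it is $\bar r_3$ whose gradient appears, because $\bar\mu_\rho-\hbmu_\rho\pm\bar r_3$ is used as a test function in the flux equation, so $\norm{\bar r_3}_{H^1}$ is what must be controlled. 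With the correct assignment your difficulty dissolves: $\bar r_2$ in $L^2$ is handled exactly like $\bar r_1$, and the $H^1$-bound for $\bar r_3$ is feasible precisely because $\bar r_3$ contains no discrete-solution quantities at all, only projections of the smooth exact solution. The one step your $L^2$-only treatment of $\bar r_3$ misses is the gradient term $\gamma\la\nabla(\hat\rho_{h,\tau}-\I_\tau^1\rho),\nabla\bar w\ra^n$: by the definition \eqref{eq:defh1proj} of the $H^1$-projection this equals $\la\I_\tau^1\rho-\hat\rho_{h,\tau},\bar w\ra^n$ (the paper's identity \eqref{eq:aux2}), i.e.\ it is secretly a zero-order term; after this observation the $H^1$-bound follows from projection stability, (A6), and uniform $L^\infty(W^{1,\infty})$-bounds on the arguments of $f_\rho$.
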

\begin{proof}
The proof is rather technical and thus split into several steps. For abbreviation, we introduce $\hbLL:=\LL(\hbrho,\hbeta,\nabla\hbrho,\nabla\hbeta)$ as shorthand notation for the evaluation of the mobility matrix at the projected solution components.
%

\subsubsection*{Estimates for the mobilities.}
Under the assumptions of Theorem~\ref{thm:fulldisk}, one has 
\begin{align*}
%
\int_{I_n}\norm{\bLL_{h,\tau}-\bLL}_{L^2}^2 \, ds &\leq C'\int_{I_n}\E_\alpha(\rho_{h,\tau},\eta_{h,\tau}|\hat\rho_{h,\tau},\hat\eta_{h,\tau}) \, ds  + C''(h^4 + \tau^4),
\end{align*} 
with constants $C'$, $C''$ independent of $h$ and $\tau$.
To see this, we split the error by 
$$
\int_{I_n} \norm{\bLL_{h,\tau}-\hbLL_{h,\tau}}_{L^2}^2 \, ds 
\le \int_{I_n} 2 \norm{\bLL_{h,\tau}-\hbLL_{h,\tau}}_{L^2}^2 + 2 \norm{\hbLL_{h,\tau}-\hbLL}_{L^2}^2 \, ds,
$$
and then estimate the two terms separately. By assumption (A2), we obtain 
\begin{align*}
   |\bLL_{h,\tau}-\hbLL_{h,\tau}|&=|\LL(\bar\rho_{h,\tau},\bar\eta_{h,\tau},\nabla\bar\rho_{h,\tau},\nabla\bar\eta_{h,\tau})-\LL(\hbrho_{h,\tau},\hbeta_{h,\tau},\nabla\hbrho_{h,\tau},\nabla\hbeta_{h,\tau})|, \\
   &\leq C (|\bar\rho_{h,\tau}-\hbrho_{h,\tau}|  + |\bar\eta_{h,\tau}-\hbeta_{h,\tau}| + |\nabla(\bar\rho_{h,\tau}-\hbrho_{h,\tau})| + |\nabla(\bar\eta_{h,\tau}-\hbeta_{h,\tau})|),
\end{align*}
which follows immediately from Taylor estimates and bounds on the derivatives.
The bound for the first term in the above splitting then follows immediately from the lower bound for the relative energy stated in Lemma~\ref{lem:properties}.
For the second term, we use
\begin{align*}
    \int_{I_n}\norm{\hbLL_{h,\tau}-\bLL}_{L^2}^2 \, ds
    &= \int_{I_n}\norm{\LL(\hbrho_{h,\tau},\hbeta_{h,\tau},\nabla\hbrho_{h,\tau},\nabla\hbeta_{h,\tau})-\overline{\LL(\rho,\eta,\nabla\rho,\nabla\eta)}}_{L^2}^2 \, ds\\
    & \leq \int_{I_n}\norm{\LL(\hbrho_{h,\tau},\hbeta_{h,\tau},\nabla\hbrho_{h,\tau},\nabla\hbeta_{h,\tau})-\LL(\bar\rho,\bar\eta,\nabla\bar\rho,\nabla\bar\eta)}_{L^2}^2 \\
    &\qquad + \norm{\LL(\bar\rho,\bar\eta,\nabla\bar\rho,\nabla\bar\eta)-\overline{\LL(\rho,\eta,\nabla\rho,\nabla\eta)}}_{L^2}^2 \, ds  =: (a) + (b).
\end{align*}
Let us recall that $\bar \omega = \bar \pi_\tau^0 \omega$ is used to denote the piecewise constant projection in time.
By appropriately expanding the first term and $(\sum_{i=1}^N a_i)^2 \le N \sum_{i=1}^N a_i^2$, we can see that 
\begin{align*}
  (a) \le \changestwo{4} \int_{I_n}&\norm{\LL(\hbrho_{h,\tau},\hbeta_{h,\tau},\nabla\hbrho_{h,\tau},\nabla\hbeta_{h,\tau})-\LL(\bar\rho,\hbeta_{h,\tau},\nabla\hbrho_{h,\tau},\nabla\hbeta_{h,\tau})}_{L^2}^2 \\
  &+ \norm{\LL(\bar\rho,\hbeta_{h,\tau},\nabla\hbrho_{h,\tau},\nabla\hbeta_{h,\tau}) - \LL(\bar\rho,\hbeta_{h,\tau},\nabla\bar\rho,\nabla\hbeta_{h,\tau})}_{L^2}^2 \\
  &+ \norm{\LL(\bar\rho_{h,\tau},\hbeta_{h,\tau},\nabla\bar\rho,\nabla\hbeta_{h,\tau}) - \LL(\bar\rho,\bar\eta,\nabla\bar\rho,\nabla\hbeta_{h,\tau})}_{L^2}^2 \\
 &+ \norm{\LL(\bar\rho,\bar\eta,\nabla\bar\rho,\nabla\hbeta_{h,\tau}) - \LL(\bar\rho,\bar\eta,\nabla\bar\rho,\nabla\bar\eta)}_{L^2}^2 \, ds
\end{align*}
By multiple applications of Taylor estimates, the uniform bounds on $\LL$ and its derivatives in assumption (A2), and the estimates of Lemma~\ref{lem:projerr} for the projection errors, we then get
\begin{align*}
 (a) 
 \leq C(\LL)\int_{I_n} \norm{\hbrho_{h,\tau}-\bar\rho}_1^2  + \norm{\hbeta_{h,\tau}-\bar\eta}_1^2 \, ds 
 \leq C(\norm{\rho}_{L^2(H^3)}^2+\norm{\rho}_{L^2(H^3)}^2) \, h^4.    
\end{align*}
For the second term, we use the estimate \eqref{eq:midpoint_order_single} to see that 
\begin{align*}
  (b) 
  &= \int_{I_n}\norm{\overline{\LL(\bar\rho,\bar\eta,\nabla\bar\rho,\nabla\bar\eta)}-\overline{\LL(\rho,\eta,\nabla\rho,\nabla\eta)}}_{L^2}^2 \, ds \\
   &= C^* \tau^4 \norm{\LL(\rho,\eta,\nabla\rho,\nabla\eta)}_{H^2(L^2)}^2 
   \leq C\tau^4(\norm{\rho}_{H^2(H^2)}^2 + \norm{\eta}_{H^2(H^2)}^2).
\end{align*}
The constant $C^*$ here depends on the bounds for $\LL$ and its derivatives as well as on lower order norms of $\rho$ and $\eta$.
By combination of the previous results, we thus obtain the stated bounds for the differences in the mobilities. 
We now turn to the estimates for the residuals.

\subsubsection*{First residual}
Using the triangle inequality, we immediately get 
\begin{align*}
\int_{I_n}  \norm{\bar r_{1,h,\tau}}_{H^{-1}}^2 \, ds
&\leq 3 \big( \norm{\dt(\pi_h^1\rho-\rho)}_{L^2(H^{-1})}^2 + \norm{\bLL_{11}\nabla\hbmu_{\rho,h,\tau}-\overline{\LL_{11}\nabla\mu_\rho}}^2_{L^2(L^2)}\\ 
& \qquad \qquad \qquad \qquad 
+ \norm{\bLL_{12}\hbmu_{\eta,h,\tau}-\overline{\LL_{12}\mu_\eta}}^2_{L^2(L^2)} \big) =: (i) + (ii) + (iii).
\end{align*}
Via Lemma~\ref{lem:projerr}, the first term can be estimated by $(i) \leq C h^4\norm{\dt\rho}_{L^2(H^1)}^2$.
The second term can again be expanded into several parts by 
\begin{align*}
   (ii) & = 3 \big( \norm{(\bLL_{11,h,\tau}-\bLL_{11})\nabla\hbmu_{\rho,h,\tau}}_{L^2(L^2)}^2  + \norm{\bLL_{11}\nabla(\hbmu_{\rho,h,\tau}-\bmu_\rho)}_{L^2(L^2)}^2 \\
   &\qquad \qquad \qquad \qquad 
   + \norm{\bLL_{11}\nabla\bmu_\rho-\overline{\LL_{11}\nabla\mu_\rho}}_{L^2(L^2)}^2 \big)
   = :(iia) + (iib) + (iic),
\end{align*}
which can now readily be estimated using Lemma~\ref{lem:projerr}, Lemma~\ref{lem:average_time_err}, the bounds for the mobility errors derived above, and Hölder inequalities. In this manner, we obtain
\begin{align*}
   (iia) &\leq  \norm{\nabla\hbmu_{\rho,h,\tau}}_{L^\infty(L^\infty)}^2 \int_{I_n}  \norm{\bLL_{11,h,\tau}-\hbLL_{11,h,\tau}}_0^2 + \norm{\hbLL_{11,h,\tau}-\bLL_{11}}_0^2 \, ds \\
& \leq C\int_{I_n}  \E_\alpha(\rho_{h,\tau},\eta_{h,\tau}|\hat\rho_{h,\tau},\hat\eta_{h,\tau}) \, ds + C(h^4 + \tau^4), 
\end{align*}
as well as $(iib) \leq Ch^4\norm{\mu_\rho}_{L^2(H^3)}^2$ and $(iic) \leq C\tau^4\norm{\LL_{11}\nabla\mu_\rho}_{H^2(H^1)}^2$.
Together this yields
\begin{align*}
    (ii) \leq C\norm{\nabla\hbmu_{\rho,h,\tau}}_{L^\infty(L^\infty)}^2 \int_{I_n}  \E_\alpha(\rho_{h,\tau},\eta_{h,\tau}|\hat\rho_{h,\tau},\hat\eta_{h,\tau}) \, ds + C(h^4+\tau^4)
\end{align*}
The third term in the previous splitting can be treated similarly, leading to
\begin{align*}
   (iii) \leq  C\norm{\hbmu_{\eta,h,\tau}}_{L^\infty(L^\infty)}^2 \int_{I_n}  \E_\alpha(\rho_{h,\tau},\eta_{h,\tau}|\hat\rho_{h,\tau},\hat\eta_{h,\tau}) \, ds + C(h^4+\tau^4).
\end{align*}
Using stability properties of the $L^2$-projection in space and time, cf \eqref{eq:l2stabw1p}, we find that 
\begin{align*}
 \norm{\nabla\hbmu_{\rho,h,\tau}}_{L^\infty(L^\infty)}^2  \leq C\norm{\nabla\mu_\rho}_{L^\infty(L^\infty)}, 
 \qquad  
 \norm{\hbmu_{\eta,h,\tau}}_{L^\infty(L^\infty)}^2  \leq C\norm{\mu_\eta}_{L^\infty(L^\infty)}.
\end{align*}
Putting everything together, we have thus shown that
\begin{align*}
    \int_{I_n}  \norm{\bar r_{1,h,\tau}}_{H^1}^2 \, ds&\leq \tilde C_1\int_{I_n}  \E_\alpha(\rho_{h,\tau},\eta_{h,\tau}|\hat\rho_{h,\tau},\hat\eta_{h,\tau}) \, ds + C_1'(h^4+\tau^4)
\end{align*}
with constants $C_1,C_1'$ that are independent of $h$ and $\tau$.

\subsubsection*{Second residual.}
With exactly the same arguments as used above for the estimation of the first residual, we obtain the bound
\begin{align*}
    \int_{I_n}  \norm{\bar r_{2,h,\tau}}_{L^2}^2 \, ds &\leq \tilde C_2'\int_{I_n}  \E_\alpha(\rho_{h,\tau},\eta_{h,\tau}|\hat\rho_{h,\tau},\hat\eta_{h,\tau}) \, ds + C_2''(h^4+\tau^4).
\end{align*}
The constants $C_2'$, $C_2''$ are again independent of $h$ and $\tau$.

\subsubsection*{Third residual.}
Let us start with the observation that
\begin{align} \label{eq:aux2}
\la \nabla (\hat \rho_{h,\tau} - \I_\tau^1 \rho), \nabla \bar \xi_{h,\tau} \ra^n
&= \la \I_\tau^1 \phi - \hat \rho_{h,\tau}, \bar \xi_{h,\tau}\ra^n,
\end{align}
which follows immediately from the definition of $\hat \rho_{h,\tau}$ and \eqref{eq:defh1proj}.
Hence, we can estimate
\begin{align*}
\int_{I_n}  \norm{\bar r_{3,h,\tau}}_{H^1}^2 \, ds &\leq 3 \big( \norm{\pi_h^0\mu_{\rho}- I_1\pi_h^0\mu_\rho}_{L^2(H^{1})}^2 + \norm{\hat\rho_{h,\tau}-I_1\rho}^2_{L^2(H^1)} \\
& \qquad \qquad  +
\norm{f_\rho(\hat\rho_{h,\tau},\hat\eta_{h,\tau}) - I^1_\tau f_\rho(\hat\rho,\hat\eta) }^2_{L^2(H^1)} \big) 
=(i) + (ii) + (iii).
\end{align*}
By Lemma~\ref{lem:projerr}, the first term can be bounded by $(i) \leq C\tau^4\norm{\mu_\rho}_{H^2(H^1)}$. 
The second term can be split into several parts and then estimated by
\begin{align*}
(ii) 
&\le 3 \big(\|\I_\tau^1 \rho - \rho\|_{L^2(H^1)}^2 + \|\rho - \pi_h^1\rho\|^2_{L^2(H^1)} + \|\pi_h^1 \rho - \I_\tau^1 \pi_h^1 \rho\|_{L^2(H^1)}^2 \big) \\
&\le C  \big( h^4 \|\rho\|_{L^2(H^3)}^2 + \tau^4 \|\rho\|_{H^2(H^1)}^2 \big).
\end{align*}
From assumption (A7) and the properties of the projection operators, we conclude that 
$\rho,\eta$ and their projections $\hat\rho_{h,\tau},\hat\eta_{h,\tau}$ can be bounded uniformly in $L^\infty(W^{1,\infty})$.
Therefore, all derivatives of $f(\cdot,\cdot)$ appearing in the following can be bounded by a constant $C_f$.
As a consequence, we then obtain 
\begin{align*}
(iii) 
&\le 2 \big( \|f_\rho(\hat \rho_{h,\tau},\hat\eta_{h,\tau}) - f_\rho(\rho,\eta)\|_{L^2(H^1)}^2 + \|f_\rho(\rho,\eta) - \I_\tau^1 f_\rho(\rho,\eta)\|_{L^2(H^1)}^2 \big) 
\\
&\le C_f \big( \|\hat\rho_{h,\tau} - \rho\|_{L^2(H^1)}^2 + \|\hat\eta_{h,\tau} - \eta\|_{L^2(H^1)}^2\big) + C \tau^4\|f_\rho(\rho,\eta)\|_{H^2(H^1)}^2.
\end{align*}
The last term involves higher derivatives of $f$ as well as cubic products of $\rho,\eta$ and its derivatives, with the highest order terms are given by $\dtt\nabla\rho$ and $\dtt\nabla\eta$. This yields to
\begin{align*}
 \|f_\rho(\rho,\eta)\|_{H^2(H^1)} \leq C_f (1 + \norm{(\rho,\eta)}_{H^2(H^1)} + \norm{(\rho,\eta)}_{H^1(H^3)})^3  
\end{align*}
In summary, the second residual may thus be estimated by
\begin{align*}
\int_{I_n} \|\bar r_{3,h,\tau}\|_{H^1}^2 \,ds 
\le C_3(h^4+\tau^4)
\end{align*}
with constant $C_2$ independent of the discretization parameters $h$ and $\tau$.

\subsubsection*{Fourth residual.}
The bound for the fourth residual is again obtain with exactly the same arguments as used for the estimation of the third residual. Here we obtain
\begin{align*}
\int_{I_n} \|\bar r_{4,h,\tau}\|_{L^2}^2 \,ds 
&\le C_4 (h^4+\tau^4)
\end{align*}
with constant $C_4$ independent of $h$ and $\tau$. This concludes the proof of the lemma.
\end{proof}

\subsection{Estimate for the discrete error}
A combination of the previous results allows us to prove the following estimate for the discrete error component. 
\begin{lemma}\label{lem:dic_err}
Let (A0)--(A6) hold. Then 
\begin{align*}
 \norm{\rho_{h,\tau}&-\hat\rho_{h,\tau}}_{L^\infty(H^1)} + \norm{\eta_{h,\tau}-\hat\eta_{h,\tau}}_{L^\infty(H^1)} \\
  &+ \norm{\bmu_{\rho,h,\tau}-
      \hbmu_{\rho,h,\tau}}_{L^2(H^1)} + \norm{\bmu_{\eta,h,\tau}-\hbmu_{\eta,h,\tau}}_{L^2(L^2)} \leq C(h^2+\tau^2)
  \end{align*}
\end{lemma}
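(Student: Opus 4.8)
The plan is to combine the relative energy stability estimate of Theorem~\ref{thm:stab} with the residual bounds of Lemma~\ref{lem:res_est} and to close the argument by a discrete Grönwall inequality. First I would insert the residual bound of Lemma~\ref{lem:res_est} into the right-hand side of the stability estimate of Theorem~\ref{thm:stab}; absorbing the resulting contribution $c\int_{I_n}\E_\alpha\,ds$ into the relative-energy term already present there, this yields, for every time step $1 \le n \le N$,
\begin{align*}
\E_\alpha(\rho_{h,\tau},\eta_{h,\tau}|\hat\rho_{h,\tau},\hat\eta_{h,\tau})\Big\vert_{t^{n-1}}^{t^n} &+ C_1 \int_{I_n} \D_{\bar\rho_{h,\tau},\bar\eta_{h,\tau}}(\bmu_{\rho,h,\tau}-\hbmu_{\rho,h,\tau},\bmu_{\eta,h,\tau}-\hbmu_{\eta,h,\tau})\,ds \\
&\le \tilde C \int_{I_n} \E_\alpha(\rho_{h,\tau},\eta_{h,\tau}|\hat\rho_{h,\tau},\hat\eta_{h,\tau})\,ds + C(h^4+\tau^4),
\end{align*}
where the last term collects the local contributions whose sum over $n$ reproduces the global $(h^4+\tau^4)$ bound.

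Next I would sum this inequality over $n=1,\dots,m$ for an arbitrary $m \le N$. The first term telescopes to $\E_\alpha(t^m)-\E_\alpha(t^0)$, and the choice of initial values together with $\hat\rho_{h,\tau}(0)=\I_\tau^1\pi_h^1\rho(0)=\pi_h^1\rho(0)=\rho_{h,0}$ (and likewise for $\eta$) gives $\rho_{h,\tau}(0)=\hat\rho_{h,\tau}(0)$, $\eta_{h,\tau}(0)=\hat\eta_{h,\tau}(0)$, whence $\E_\alpha(t^0)=0$ directly from the definition of the relative energy. To prepare the Grönwall step I would bound $\int_{I_n}\E_\alpha\,ds \le \tau\max_{I_n}\E_\alpha$ and use the two-sided equivalence of Lemma~\ref{lem:properties} together with the convexity of $\|\cdot\|_{H^1}^2$ in time on each interval to control $\max_{I_n}\E_\alpha$ by the nodal values $\E_\alpha(t^{n-1}),\E_\alpha(t^n)$. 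A discrete Grönwall inequality (valid for $\tau$ sufficiently small, as assumed) then produces the uniform-in-$m$ estimate
\begin{align*}
\E_\alpha(t^m) &+ C_1 \int_0^{t^m} \D_{\bar\rho_{h,\tau},\bar\eta_{h,\tau}}(\bmu_{\rho,h,\tau}-\hbmu_{\rho,h,\tau},\bmu_{\eta,h,\tau}-\hbmu_{\eta,h,\tau})\,ds \\
&\le C\,e^{\tilde C T}(h^4+\tau^4).
\end{align*}

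From this single estimate I would extract the four stated norms. For the phase fields, the lower bound in Lemma~\ref{lem:properties} controls $\|(\rho_{h,\tau}-\hat\rho_{h,\tau})(t)\|_{H^1}^2+\|(\eta_{h,\tau}-\hat\eta_{h,\tau})(t)\|_{H^1}^2$ by $\E_\alpha(t)/c_\alpha$ at every node; since both differences are continuous and piecewise linear in time, $\|\cdot\|_{H^1}^2$ is a convex quadratic on each interval and hence attains its maximum at a node, so the $L^\infty(H^1)$ norms are governed by $\max_n\E_\alpha(t^n)\le C(h^4+\tau^4)$. For the chemical potentials, the second inequality of Lemma~\ref{lem:properties} bounds $\|\nabla(\bmu_{\rho,h,\tau}-\hbmu_{\rho,h,\tau})\|_{L^2}^2+\|\bmu_{\eta,h,\tau}-\hbmu_{\eta,h,\tau}\|_{L^2}^2$ by $C_L\,\D$; integrating in time and invoking the dissipation bound above immediately yields the full estimate for $\|\bmu_{\eta,h,\tau}-\hbmu_{\eta,h,\tau}\|_{L^2(L^2)}$ and the gradient part of $\|\bmu_{\rho,h,\tau}-\hbmu_{\rho,h,\tau}\|_{L^2(H^1)}$.

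I expect the main obstacle to be recovering the \emph{full} $H^1$ norm of $\bmu_{\rho,h,\tau}-\hbmu_{\rho,h,\tau}$, since the dissipation functional only controls its gradient. To close this gap I would test the discrete potential equations \eqref{eq:pg3} and \eqref{eq:ppg3} with the constant function and estimate the mean $\la\bmu_{\rho,h,\tau}-\hbmu_{\rho,h,\tau},1\ra$ through the difference $f_\rho(\rho_{h,\tau},\eta_{h,\tau})-f_\rho(\hat\rho_{h,\tau},\hat\eta_{h,\tau})$ and the residual $\bar r_{3,h,\tau}$, exactly as in the mean-value estimate \eqref{eq:mean_mu_ref}. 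Combining this control of the mean with the gradient bound via a Poincaré inequality upgrades the seminorm estimate to the full $L^2(H^1)$ bound. Taking square roots throughout converts the $(h^4+\tau^4)$ estimates into the asserted $(h^2+\tau^2)$ convergence rate.
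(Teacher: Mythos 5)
Your proposal is correct and follows essentially the same route as the paper: combining Theorem~\ref{thm:stab} with Lemma~\ref{lem:res_est}, controlling $\int_{I_n}\E_\alpha\,ds$ by the nodal values via convexity and the norm equivalence of Lemma~\ref{lem:properties}, closing with the discrete Gronwall inequality and the vanishing initial relative energy, and recovering the full $L^2(H^1)$ bound for $\bmu_{\rho,h,\tau}-\hbmu_{\rho,h,\tau}$ through the mean-value estimate \eqref{eq:mean_mu_ref} plus Poincar\'e. Your explicit remark that the per-step $(h^4+\tau^4)$ contributions must sum to a global bound, and your spelled-out convexity argument for the $L^\infty(H^1)$ extraction, are points the paper leaves implicit.
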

\begin{proof}
We use the bounds for the relative energy in Lemma~\ref{lem:properties} and the fact that the values $u(t) = \lambda u^n + (1-\lambda) u^{n-1}$ of a linear function on $I_n$ is a convex combination of its terminal values. Together with the convexity of the squared $L^2$-norm, we then get 
\begin{align*}
\int_{I_n} \E_\alpha(u|\hat u) \, ds
&\le C_\alpha \int_{I_n} \|u - \hat u\|^2 \, ds\\
&\le C_\alpha \tau \int_0^1 \lambda \|u^n - \hat u^n\|^2 + (1-\lambda) \|u^{n-1} - \hat u^{n-1}\|^2 d\lambda \\
&\le \frac{C_\alpha}{cc_\alpha} \tau (\E_\alpha(u^n|\hat u^n) + \E_\alpha(u^{n-1}|\hat u^{n-1})). 
\end{align*}
By combination of Theorem~\ref{thm:stab}, Lemma~\ref{lem:residuals}, and Lemma~\ref{lem:res_est}, we then obtain 
\begin{align*}
\E_\alpha(\rho_{h,\tau},\eta_{h,\tau}&|\hat\rho_{h,\tau},\hat\eta_{h,\tau})\Big\vert_{t^{n-1}}^{t^n} + C_1 \int_{I_n} \D_{\bar\rho_{h,\tau},\bar\eta_{h,\tau}}(\mu_{\rho,h,\tau}-\hat\mu_{\rho,h,\tau},\mu_{\eta,h,\tau}-\hat\mu_{\eta,h,\tau}) \, ds \\
&\leq C_2 \tau \left(\E_\alpha(\rho_{h,\tau}^n,\eta_{h,\tau}^n|\hat\rho_{h,\tau}^n,\hat\eta_{h,\tau}^n) + \E_\alpha(\rho_{h,\tau}^{n-1},\eta_{h,\tau}^{n-1}|\hat\rho_{h,\tau}^{n-1},\hat\eta_{h,\tau}^{n-1}) \right) 
+ C_3 (h^4 + \tau^4).
\end{align*}
After a slight rearrangement of terms and using $(1- C_2 \tau) \ge e^{-\lambda \tau}$ and $(1+C_2 \tau) \le e^{\lambda \tau}$ for all $\tau \le 1/(2C_2)$ with some constant $\lambda \approx C_2$, this can be written in compact form  
\begin{align*}
e^{-\lambda \tau} u^n &+ b^n \le e^{\lambda \tau} u^{n-1} + d^n
\end{align*}
with
$u^n =\E_\alpha(\rho_{h,\tau}^n,\eta_{h,\tau}^n|\hat\rho_{h,\tau}^n,\hat\eta_{h,\tau}^n)$,
$b^n =  \frac{1}{2} \int_{I_n} \D_{\bar \rho_{h,\tau},\bar \eta_{h,\tau}}(\bmu_{\rho,h,\tau}-\hbmu_{\rho,h,\tau}, \bmu_{\eta,h,\tau}-\hbmu_{\eta,h,\tau}) \, ds$, 
and 
$d^n =  \bar C \hat C (h^2 + \tau^2)^2$.
By the discrete Gronwall inequality \eqref{eq:discgronwall}, we thus obtain 
\begin{align*}
\E_\alpha(\rho_{h,\tau}^n,\eta_{h,\tau}^n|\hat\rho_{h,\tau}^n,\hat\eta_{h,\tau}^n) &+ \int_0^{t^n} \D_{\bar \rho_{h,\tau},\bar \eta_{h,\tau}}(\bmu_{\rho,h,\tau}-\hbmu_{\rho,h,\tau}, \bmu_{\eta,h,\tau}-\hbmu_{\eta,h,\tau}) \, ds \\
&\le C_T \E_\alpha(\rho_{h,\tau}^0,\eta_{h,\tau}^0|\hat\rho_{h,\tau}^0,\hat\eta_{h,\tau}^0) + C_T' (h^4 + \tau^4),
\end{align*}
with constants $C_T$ and $C_T'$ depending only on the maximal time $T$ and the previous bounds. 
Due to the choice of the initial values in Problem~\ref{prob:pg}, the first term on the right hand side vanishes identically.
Lemma~\ref{lem:properties} then allows us to estimate the relative energy from below by the norm. 
With the lower bounds in (A1)--(A3) and the estimate \eqref{eq:mean_mu_ref}, we can further estimate the dissipation term from below by the corresponding norms. 
In summary, this yields the estimates stated in the lemma. 
\end{proof}

\subsection{Convergence rates.}
The convergence rates announced in Theorem~\ref{thm:fulldisk} now immediately follow by splitting the error into projection and discrete error components and application of Lemma~\ref{lem:projerr} and Lemma~\ref{lem:dic_err} to estimate the individual components. 
\hfill 
\qed

\subsection{Discrete uniqueness}
Let $(\rho_{h,\tau},\bmu_{\rho,h,\tau},\eta_{h,\tau},\bmu_{\eta,h,\tau})$ be a solution of Problem~\ref{prob:pg} and and $(\hat\rho_{h,\tau},\hbmu_{\rho,h,\tau},\hat\eta_{h,\tau},\hbmu_{\eta,h,\tau})$ be another solution with the same initial values.  
As before, we may understand the latter as a solution of the perturbed system \eqref{eq:ppg1}--\eqref{eq:ppg2} which is linearized around the original solution. 
The corresponding residuals are 
\begin{align}
\la \bar r_{1,h,\tau}, \bar v_{1,h,\tau}\ra^n
&=  \la (\bLL_{11,h,\tau}- \hbLL_{11,h,\tau})\nabla \hbmu_{\rho,h,\tau}   + (\bLL_{12,h,\tau}- \hbLL_{12,h,\tau})\hbmu_{\eta,h,\tau} , \nabla \bar v_{1,h,\tau}\ra^n  
\\
\la \bar r_{2,h,\tau},\bar v_{2,h,\tau} \ra^n 
&=  \la (\bLL_{12,h,\tau} - \hbLL_{12,h,\tau})\nabla \hbmu_{\rho,h,\tau} + (\bLL_{22,h,\tau} - \hbLL_{22,h,\tau})\hbmu_{\eta,h,\tau}, \bar v_{2,h,\tau}\ra^n,
\end{align}
and $\bar r_{3,h,\tau} = \bar r_{4,h,\tau}=0$, which can be verified by elementary considerations. 
With similar arguments as used in the previous section, we obtain the following bounds. 
\begin{lemma} \label{lem_est_res_uniq}
Under the previous assumptions, we have   
\begin{align*}
      \int_{I_n}\norm{\bar r_{1,h,\tau}}_{H^{-1}}^2 +  \norm{\bar r_{3,h,\tau}}_{L^2}^2 \, ds \leq C \int_{I_n} \E_\alpha(\rho_{h,\tau},\eta_{h,\tau}|\hat\rho_{h,\tau},\hat\eta_{h,\tau}) \, ds.
   \end{align*}
\end{lemma}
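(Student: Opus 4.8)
The plan is to estimate the two residuals using the structure established in the main error analysis, but now specialized to the uniqueness setting where the residuals $\bar r_{1,h,\tau}$ and $\bar r_{2,h,\tau}$ involve only \emph{differences of mobilities} evaluated at two discrete solutions, and where $\bar r_{3,h,\tau}=\bar r_{4,h,\tau}=0$. Since the statement only asks to control $\bar r_{1,h,\tau}$ in $H^{-1}$ and $\bar r_{3,h,\tau}$ in $L^2$, and the latter vanishes, the entire content of the lemma is the estimate for $\int_{I_n}\|\bar r_{1,h,\tau}\|_{H^{-1}}^2\,ds$ in terms of the relative energy. First I would bound the dual norm by duality: for any $\bar v_{1,h,\tau}\in P_0(I_n;\Vh)$ we have $\la \bar r_{1,h,\tau},\bar v_{1,h,\tau}\ra^n = \la (\bLL_{11,h,\tau}-\hbLL_{11,h,\tau})\nabla\hbmu_{\rho,h,\tau} + (\bLL_{12,h,\tau}-\hbLL_{12,h,\tau})\hbmu_{\eta,h,\tau},\nabla\bar v_{1,h,\tau}\ra^n$, so that by Cauchy--Schwarz the $H^{-1}$-norm is controlled by the $L^2$-norms of the two mobility-difference products.

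The key step is then to bound these mobility differences. By assumption (A2) the mobility matrix $\LL(\cdot)$ is Lipschitz in its arguments $\omega=(\rho,\eta,\nabla\rho,\nabla\eta)$ with uniformly bounded derivatives, so pointwise
\begin{align*}
|\bLL_{ij,h,\tau}-\hbLL_{ij,h,\tau}| \le C\big(|\bar\rho_{h,\tau}-\hbrho_{h,\tau}| + |\bar\eta_{h,\tau}-\hbeta_{h,\tau}| + |\nabla(\bar\rho_{h,\tau}-\hbrho_{h,\tau})| + |\nabla(\bar\eta_{h,\tau}-\hbeta_{h,\tau})|\big).
\end{align*}
This is exactly the Taylor estimate already used in the proof of Lemma~\ref{lem:res_est}. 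I would then factor out the factors $\nabla\hbmu_{\rho,h,\tau}$ and $\hbmu_{\eta,h,\tau}$ in $L^\infty(L^\infty)$, which are controlled uniformly via the stability of the $L^2$-projections in space and time together with the regularity assumption (A6) on $\mu_\rho,\mu_\eta$, precisely as in the treatment of terms $(iia)$ and $(iii)$ in Lemma~\ref{lem:res_est}. This reduces the estimate to
\begin{align*}
\int_{I_n}\|\bar r_{1,h,\tau}\|_{H^{-1}}^2\,ds \le C\int_{I_n}\|\bar\rho_{h,\tau}-\hbrho_{h,\tau}\|_1^2 + \|\bar\eta_{h,\tau}-\hbeta_{h,\tau}\|_1^2\,ds.
\end{align*}

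Finally, I would convert the $H^1$-norms of the differences of the piecewise-constant-in-time projections into the relative energy. The crucial point is that $\bar\rho_{h,\tau}-\hbrho_{h,\tau}=\bar\pi_\tau^0(\rho_{h,\tau}-\hat\rho_{h,\tau})$, and since the $L^2$-projection in time is a contraction, its $H^1$-norm is controlled by that of $\rho_{h,\tau}-\hat\rho_{h,\tau}$; one may then invoke the lower bound $c_\alpha(\|\cdot\|_{H^1}^2+\cdots)\le\E_\alpha(\cdot|\cdot)$ from Lemma~\ref{lem:properties} to close the estimate, exactly as the first term in the mobility splitting of Lemma~\ref{lem:res_est} was handled. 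The main obstacle I anticipate is bookkeeping the $L^\infty$-bounds on $\nabla\hbmu_{\rho,h,\tau}$ and $\hbmu_{\eta,h,\tau}$ cleanly: one must ensure these projected chemical potentials inherit the uniform $W^{1,\infty}$ and $L^\infty$ bounds of the true $\mu_\rho,\mu_\eta$ from (A6) through the combined space-time projections, which rests on the $L^\infty$-stability of $\pi_h^0$ and $\bar\pi_\tau^0$ recorded in the appendix. Since $\bar r_{3,h,\tau}=0$ by construction, no further work is needed there, and combining the two contributions yields the stated bound.
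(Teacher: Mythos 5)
Your overall reduction (duality for the $H^{-1}$-norm, the pointwise Lipschitz/Taylor bound for the mobility differences from (A2), factoring out $\nabla\hbmu_{\rho,h,\tau}$ and $\hbmu_{\eta,h,\tau}$ in $L^\infty(L^\infty)$, and converting the remaining $H^1$-differences into the relative energy via Lemma~\ref{lem:properties}, noting $\bar r_{3,h,\tau}=0$) matches the paper's. But there is a genuine gap at the step you yourself flag as the main obstacle: you claim the uniform bounds on $\norm{\nabla\hbmu_{\rho,h,\tau}}_{L^\infty(L^\infty)}$ and $\norm{\hbmu_{\eta,h,\tau}}_{L^\infty(L^\infty)}$ follow from the $W^{1,\infty}$-stability of $\pi_h^0$ and $\bar\pi_\tau^0$ applied to $\mu_\rho,\mu_\eta$, ``precisely as in the treatment of $(iia)$ and $(iii)$ in Lemma~\ref{lem:res_est}''. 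That argument is unavailable here: in the uniqueness setting the hatted quadruple $(\hat\rho_{h,\tau},\hat\eta_{h,\tau},\hbmu_{\rho,h,\tau},\hbmu_{\eta,h,\tau})$ is \emph{not} the space-time projection of the exact solution --- it is a second, arbitrary discrete solution of Problem~\ref{prob:pg} with the same initial data. There is no identity of the form $\hbmu_{\rho,h,\tau}=\bar\pi_\tau^0\pi_h^0\mu_\rho$ to which projection stability could be applied, and a priori one has no pointwise control of any kind on a second discrete solution.

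The paper closes this gap differently, and this is exactly where the mesh coupling $\tau = c'h$ of Theorem~\ref{thm:fulldisk} enters --- your proposal never uses it, which is a warning sign, since uniqueness is only claimed under that condition. One writes, by the triangle inequality,
\begin{align*}
\norm{\nabla\hbmu_{\rho,h,\tau}}_{L^\infty(L^\infty)} \le \norm{\hbmu_{\rho,h,\tau}-\bar\pi_\tau^0\pi_h^0\mu_\rho}_{L^\infty(W^{1,\infty})} + \norm{\bar\pi_\tau^0\pi_h^0\mu_\rho-\bar\pi_\tau^0\mu_\rho}_{L^\infty(W^{1,\infty})} + \norm{\bar\pi_\tau^0\mu_\rho}_{L^\infty(W^{1,\infty})};
\end{align*}
the last two terms are indeed handled by projection stability and (A6), but the first term compares the second discrete solution with the projection of the true solution, and is estimated by invoking the already-proved discrete error estimate (Lemma~\ref{lem:dic_err}, applied to the hatted solution) combined with inverse inequalities in space and time: it is bounded by $\tau^{-1}h^{-3}\norm{\hbmu_{\rho,h,\tau}-\bar\pi_\tau^0\pi_h^0\mu_\rho}^2_{L^2(H^1)} \le C\,\tau^{-1}h^{-3}(h^2+\tau^2)^2$, which stays bounded only under the step-size restriction $\tau \sim h$. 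Without this ingredient your reduction inequality, correct in form, carries a constant that may blow up as $h,\tau\to 0$, and the lemma (hence discrete uniqueness) does not follow.
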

\begin{proof}
With the very same arguments as employed in the proof of Lemma~\ref{lem:res_est}, we see that
\begin{align*}
    &\int_{I_n} \norm{r_{1,h,\tau}}_{H^{-1}}^2 + \norm{r_{2,h,\tau}}_{L^2}^2 \, ds \\
    & \leq C' \, (\norm{\nabla\hbmu_{\rho,h,\tau}}^2_{L^\infty(L^\infty)}+\norm{\hbmu_{\eta,h,\tau}}^2_{L^\infty(L^\infty)})\int_{I_n} \E_{\alpha}(\rho_{h,\tau},\eta_{h,\tau}|\hat\rho_{h,\tau},\hat\eta_{h,\tau}) \, ds.
\end{align*}
In order to verify that $\norm{\nabla\hbmu_{\rho,h,\tau}}^2_{L^\infty(L^\infty)}$ and $\norm{\hbmu_{\eta,h,\tau}}^2_{L^\infty(L^\infty)}$ can be bounded independently of $h,\tau$, we make use of the true solution and the previous error estimates, and compute
\begin{align*}
 \norm{\nabla\hbmu_{\rho,h,\tau}}^2_{L^\infty(L^\infty)} \leq \norm{\hbmu_{\rho,h,\tau}&-\pi_0^\tau\pi_0^h\mu_\rho}^2_{L^\infty(W^{1,\infty})} \\
 & + \norm{\pi_0^\tau\pi_0^h\mu_\rho - \pi_0^\tau\mu_\rho}^2_{L^\infty(W^{1,\infty})} +   \norm{ \pi_0^\tau\mu_\rho}^2_{L^\infty(W^{1,\infty})}.
\end{align*}
Using error estimates for discrete solutions and an inverse inequality, we see that
\begin{align*}
 \norm{\hbmu_{\rho,h,\tau}-\pi_0^\tau\pi_0^h\mu_\rho}^2_{L^\infty(W^{1,\infty})} &\leq \tau^{-1}h^{-3}\norm{\hbmu_{\rho,h,\tau}-\pi_0^\tau\pi_0^h\mu_\rho}^2_{L^2(H^{1})} 
 \leq C(\tau^3 h^{-3} + \tau^4 h) \le C',
\end{align*}
where we used $\tau \lesssim h$ in the last step. The remaining two terms and $\norm{\hbmu_{\eta,h,\tau}}^2_{L^\infty(L^\infty)}$can be estimated in a similar manner. 
\end{proof}
%
By proceeding with essentially the same arguments as already used in the proof of the convergence rates result, we then obtain 
\begin{align*}
\E_\alpha(\rho_{h,\tau}^n,\eta_{h,\tau}^n|\hat\rho_{h,\tau}^n,\hat\eta_{h,\tau}^n) 
\le C''_T \E_\alpha(\rho_{h,\tau}^0,\eta_{h,\tau}^0|\hat\rho_{h,\tau}^0,\hat\eta_{h,\tau}^0).
\end{align*}
Since the two discrete solutions were assumed to have the same initial values, the right hand side vanishes. By  Lemma~\ref{lem:properties}, we conclude uniqueness of the discrete solution.
\hfill 
\qed

{

\section{Numerical tests}\label{sec:num}
%
For illustration of the viability of the proposed method and illustration of our theoretical findings, we now present some computational results and report about the convergence rates for a typical test problem.

\subsection*{Model problem}

We consider the domain $\Omega=(0,1)^2$and complement \eqref{eq:s1}--\eqref{eq:s2} is by periodic boundary conditions. We further introduce $\mathbf{n}_{c}(\rho):=\nabla\rho/\sqrt{c+\snorm{\nabla\rho}^2}$, which for $c>0$ is a regularized normal to the isolines of the phase field $\rho$.
We then define
\begin{align*}
 \LL_{11} &= \mathbf{I} + \frac{1}{\LL_{22}}\LL_{12}\LL_{12}^\top =  \mathbf{I} + \mathbf{n}_{1}(\rho)\otimes \mathbf{n}_{1}(\rho), \LL_{12} = \sqrt{1000}\mathbf{n}_{1}(\rho), \LL_{22} = 1000, \\
 f(\rho,\eta) &= C\rho^2(1-\rho)^2 + D[\rho^2 + 6(1-\rho)(\eta^2 + (1-\eta)^2) \\
 &- 4(2-\rho)(\eta^3 + (1-\eta)^3)  + 3(\eta^2 + (1-\eta)^2 )^2 ]  \\  
 C&= 1, \quad D=0.062, \gamma_\rho=\gamma_\eta=10^{-3}
\end{align*}

The choice for $f(\cdot,\cdot)$ is typical in the literature; cf.~\cite{Tonks_2015,oyedeji2022}. 
We further set $T=0.1$ for the final time and choose the following periodic but non-symmetric initial conditions
\begin{equation*}
 \rho_0  =  0.5 + 0.5\sin(2\pi x)\sin(2\pi y) ,\quad \eta_0  =  0.5 + 0.5\sin(4\pi x)\sin(2\pi y).
\end{equation*}

For all our computations, we use the method introduced in Problem~\ref{prob:pg} with spatial and temporal mesh size $h$ and $\tau$. In order to guarantee existence of a unique discrete solution, see Theorem~\ref{thm:fulldisk}, we choose $\tau = c h$ in all our tests with $c=0.001$. 
The choice of the time step is motivated from independent evaluations of the spatial and temporal discretization errors and leads to an approximate balance between them.
For the solution of the nonlinear system \eqref{eq:pg1}--\eqref{eq:pg4} in every time step, we employ the Newton method, which in all our tests converged within $6$ iterations to almost machine precision. 

In Figures~\ref{fig:evorho} and \ref{fig:evoeta}, we depict some snapshots of the phase fields $\rho$ and $\eta$ obtained in our simulations.
\begin{figure}[htbp!]
\centering
\footnotesize
\begin{tabular}{ccc}
\hspace{-1.5em}t=0.0156 & \hspace{-1.5em}t=0.0046 & \hspace{-1.5em}t=0.0093 \\
    \includegraphics[trim={2.0cm 0.0cm 0.0cm 0.5cm},clip,scale=0.32]{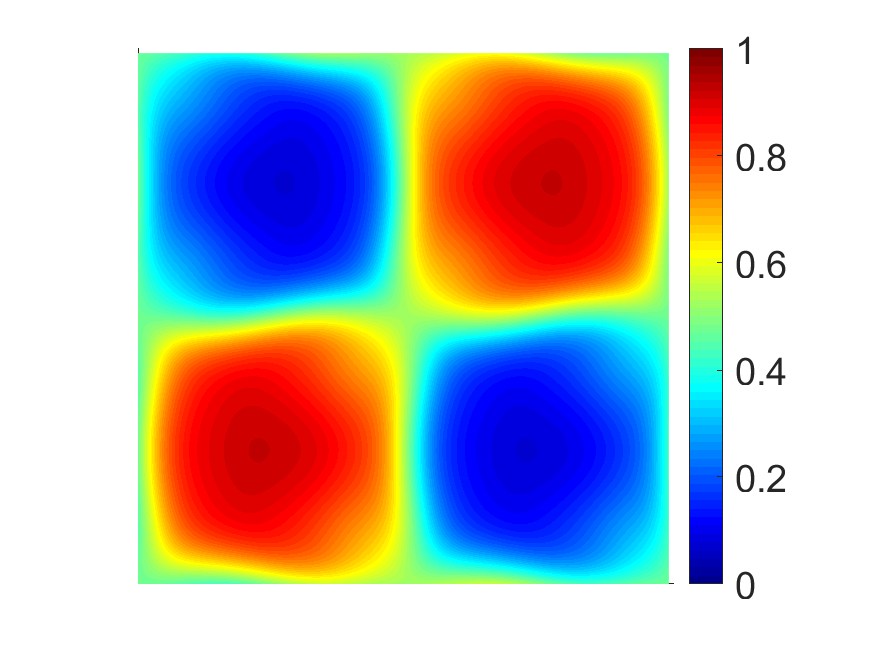} 
    &
    \includegraphics[trim={2.0cm 0.0cm 0.0cm 0.5cm},clip,scale=0.32]{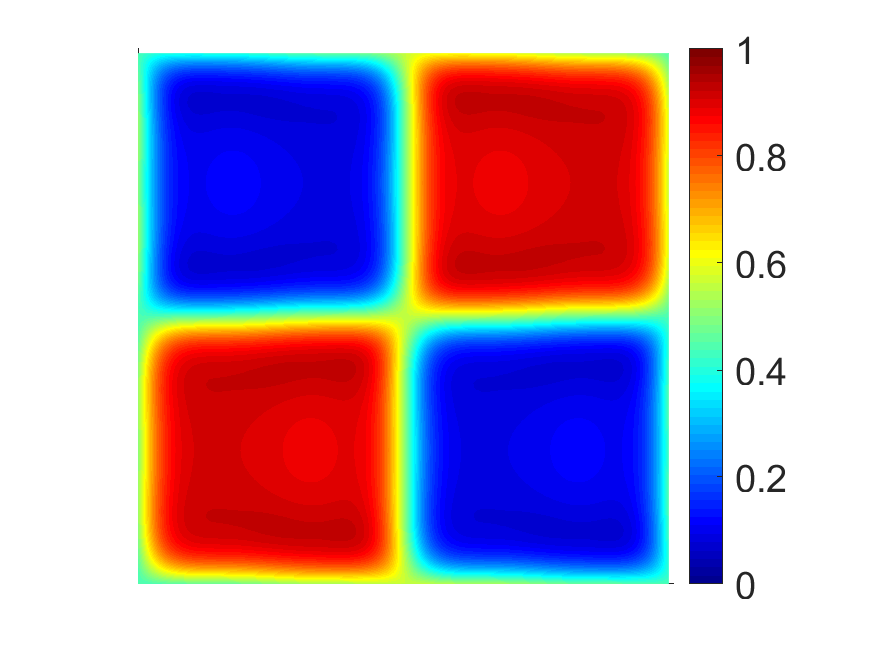}  
    &
    \includegraphics[trim={2.0cm 0.0cm 0.0cm 0.5cm},clip,scale=0.32]{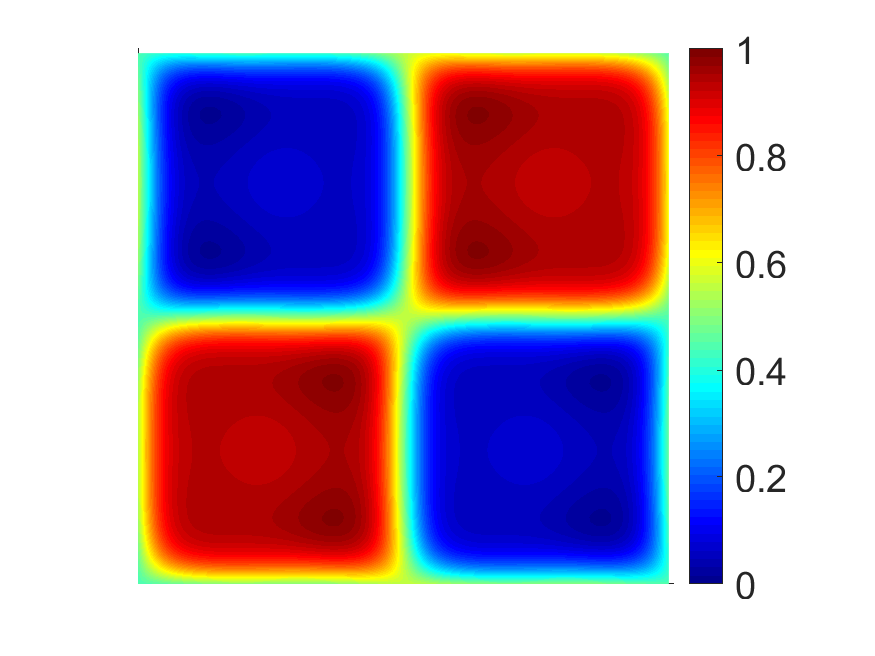}  \\[-0.5em]
    \hspace{-1.5em}t=0.0312 & \hspace{-1.5em}t=0.0625 & \hspace{-1.5em}t=0.1 \\
    \includegraphics[trim={2.0cm 0.0cm 0.0cm 0.5cm},clip,scale=0.32]{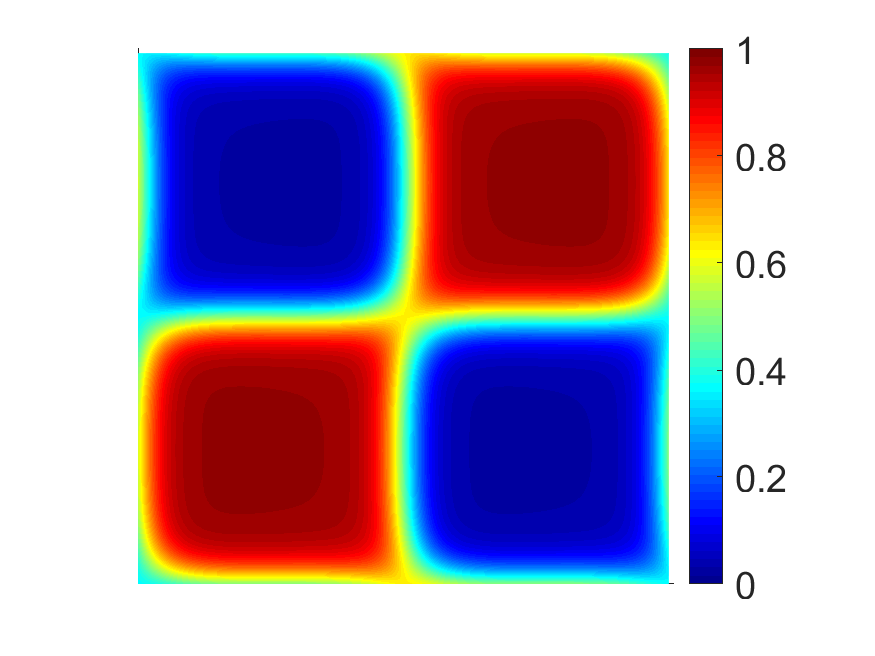}  
    &
    \includegraphics[trim={2.0cm 0.0cm 0.0cm 0.5cm},clip,scale=0.32]{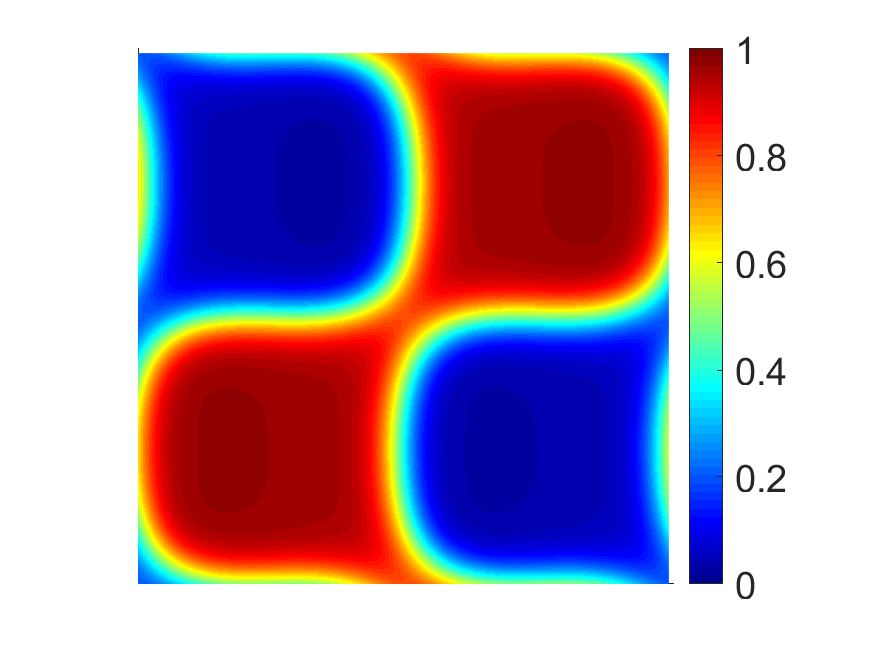} 
    &
    \includegraphics[trim={2.0cm 0.0cm 0.0cm 0.5cm},clip,scale=0.32]{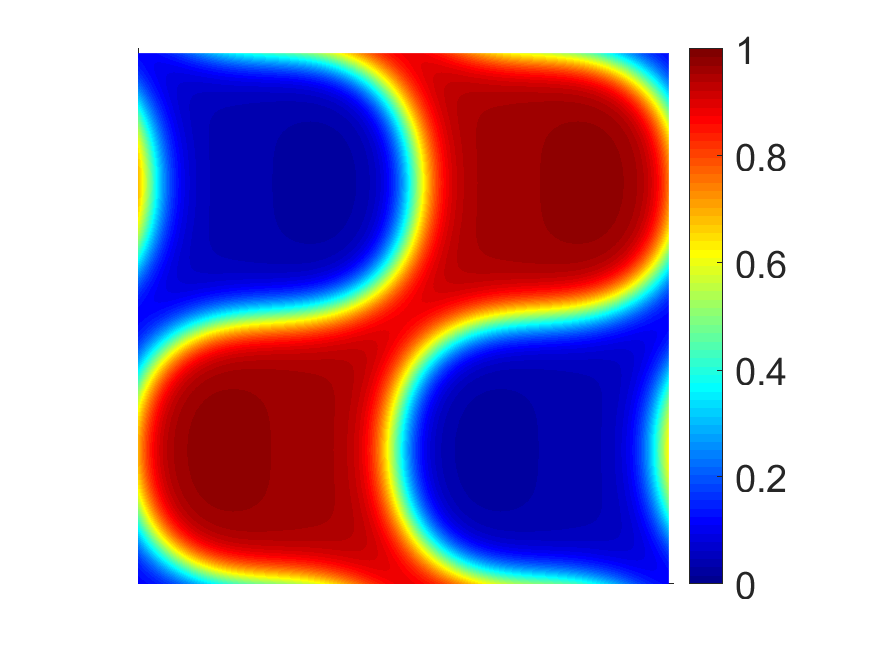} \\[-0.5em]
\end{tabular}
    \caption{Snapshots of the conserved phase field $\rho$ for simulations with $h=2^{-6}$ and $\tau=0.001\cdot h$.\label{fig:evorho}}
\end{figure}
\begin{figure}[htbp!]
\centering
\footnotesize
\begin{tabular}{ccc}
\hspace{-1.5em}t=0.0156 & \hspace{-1.5em}t=0.0046 & \hspace{-1.5em}t=0.0093 \\
    \includegraphics[trim={2.0cm 0.0cm 0.0cm 0.5cm},clip,scale=0.32]{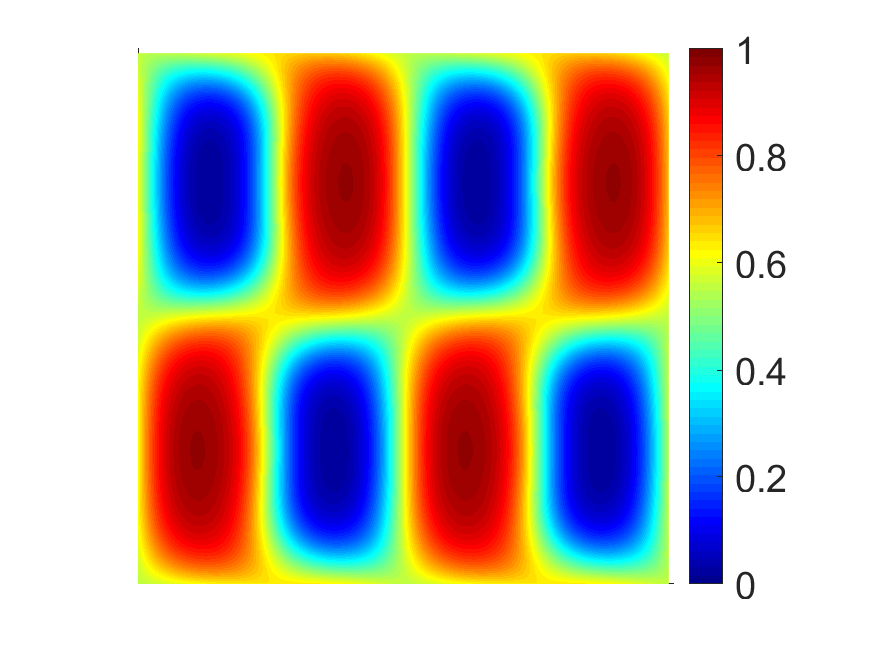} 
    &
    \includegraphics[trim={2.0cm 0.0cm 0.0cm 0.5cm},clip,scale=0.32]{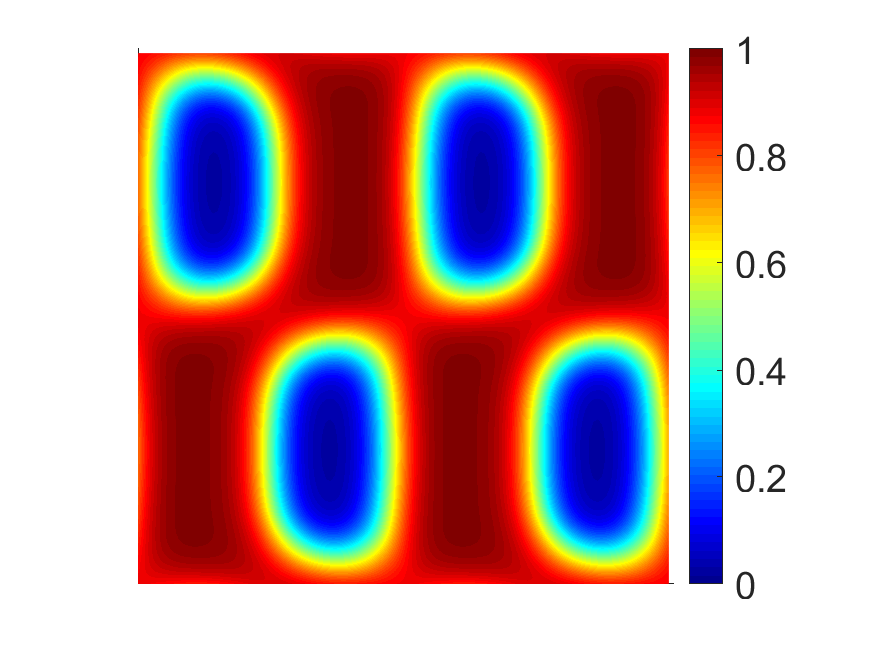}  
    &
    \includegraphics[trim={2.0cm 0.0cm 0.0cm 0.5cm},clip,scale=0.32]{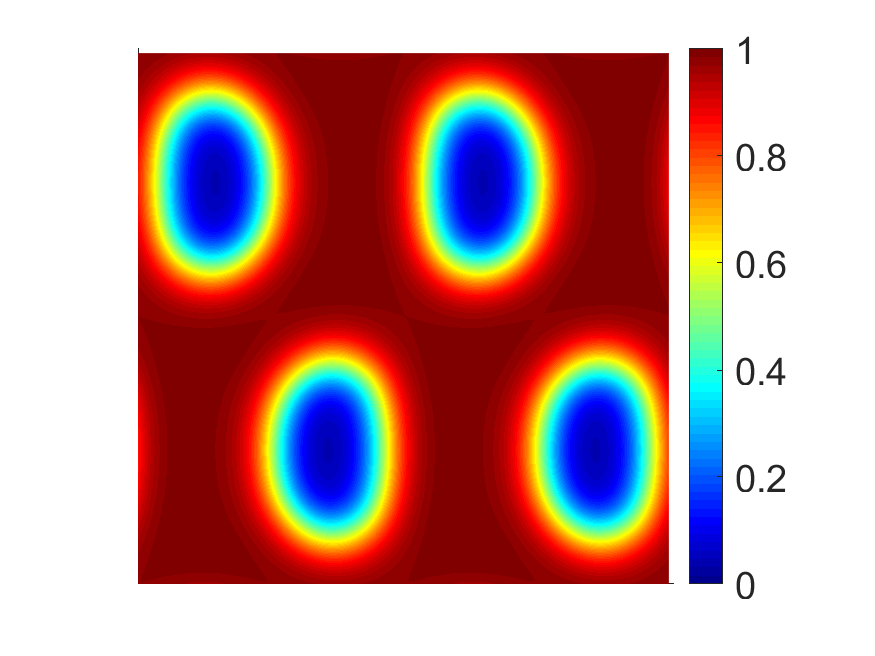}  \\[-0.5em]
     \hspace{-1.5em}t=0.0312 & \hspace{-1.5em}t=0.0625 & \hspace{-1.5em}t=0.1 \\
    \includegraphics[trim={2.0cm 0.0cm 0.0cm 0.5cm},clip,scale=0.32]{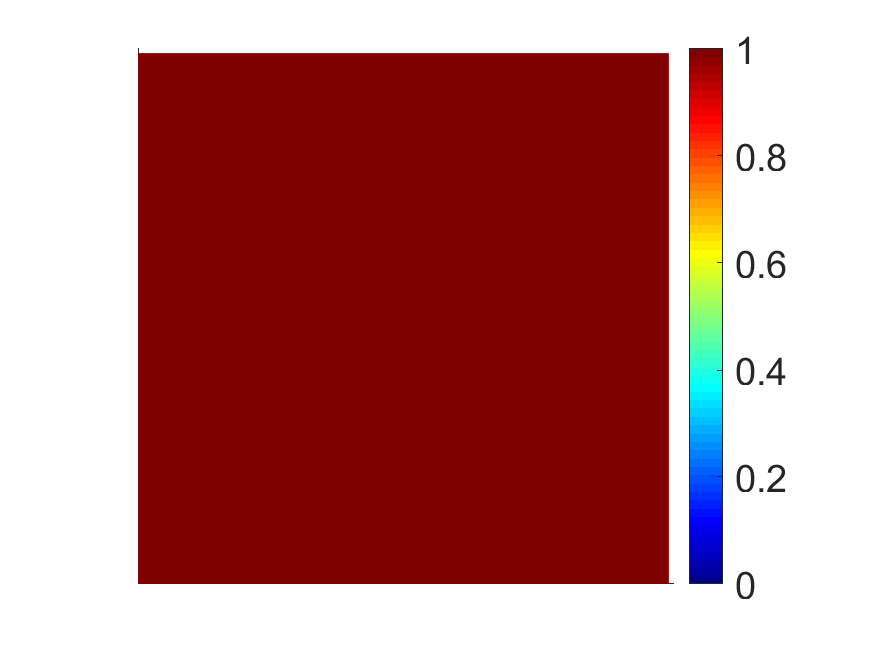}  
    &
    \includegraphics[trim={2.0cm 0.0cm 0.0cm 0.5cm},clip,scale=0.32]{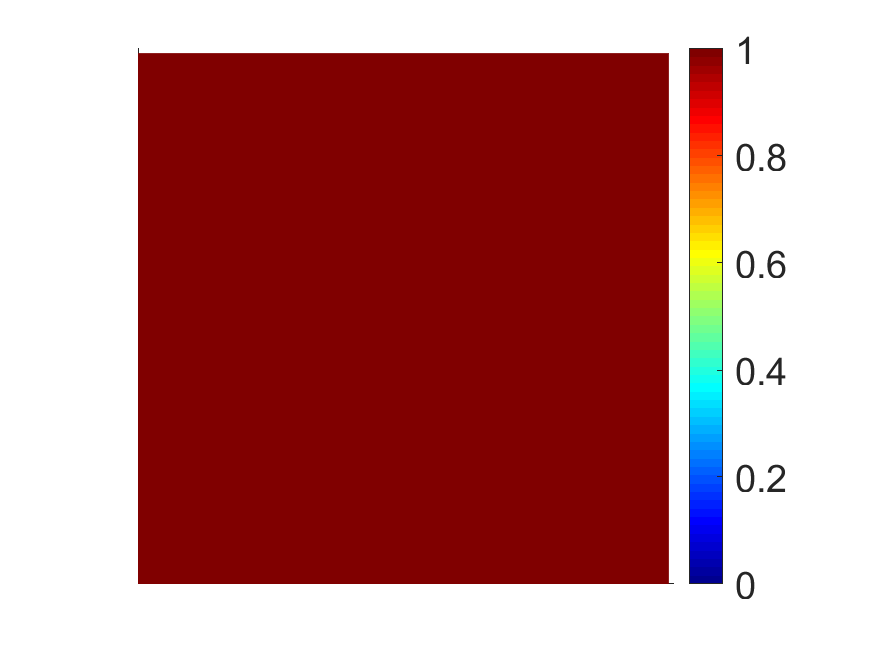} 
    &
    \includegraphics[trim={2.0cm 0.0cm 0.0cm 0.5cm},clip,scale=0.32]{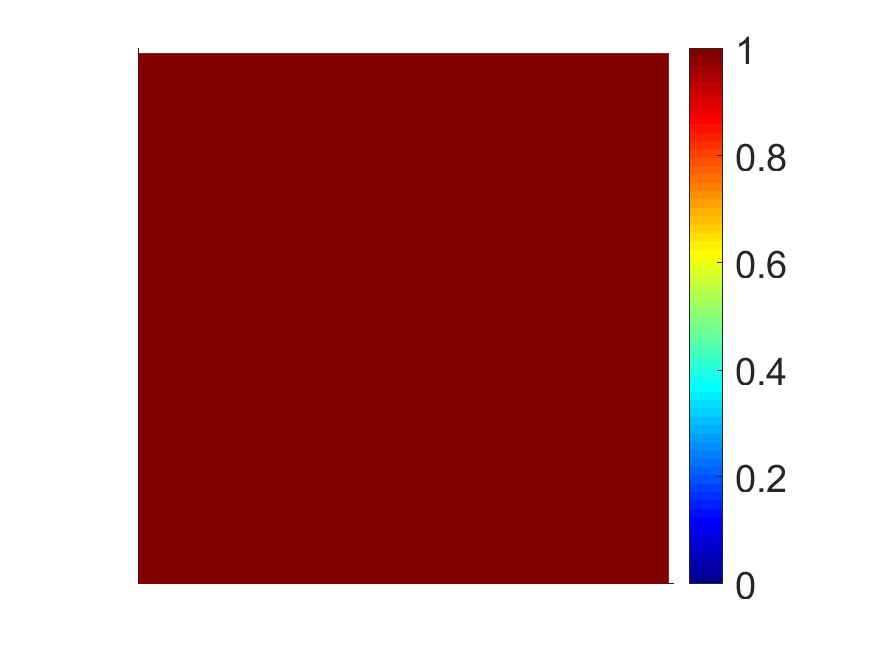} \\[-0.5em]
\end{tabular}
    \caption{Snapshots of the non-conserved phase field $\eta$ for simulations with $h=2^{-6}$ and $\tau=0.001\cdot h$.\label{fig:evoeta}}
\end{figure}
In these plots, one can observe the following typical behaviour: Both phase field variables start minimizing their surface energy creating sharper interfaces between the pure states. Due to the choice of parameters the non-conserved phase field variables evolves faster and after a short time reaches a quasi-stationary state, i.e. $\eta\approx 1$.

The deviation from the equilibrium values is then due to the cross-kinetic coupling with the conserved phase field. The evolution of the conserved phase field is much slower. In this case we can see a rather slow emergence of one connected equilibrium phase which is still far away from a stationary state.
In Figure~\ref{fig:evoerr}, we illustrate the mass-conservation and energy-dissipation of the discrete solution, which are both satisfied up to round-off errors. This was also observed for all computations in the convergence tests and we further performed tests with $c=1, 0.1, 0.01$ and the same results hold, as long as the Newton scheme converges.
\begin{figure}[htbp!]
\centering
\footnotesize
\begin{tabular}{cc}
    \includegraphics[trim={0.0cm 0.0cm 0.0cm 0.0cm},clip,scale=0.38]{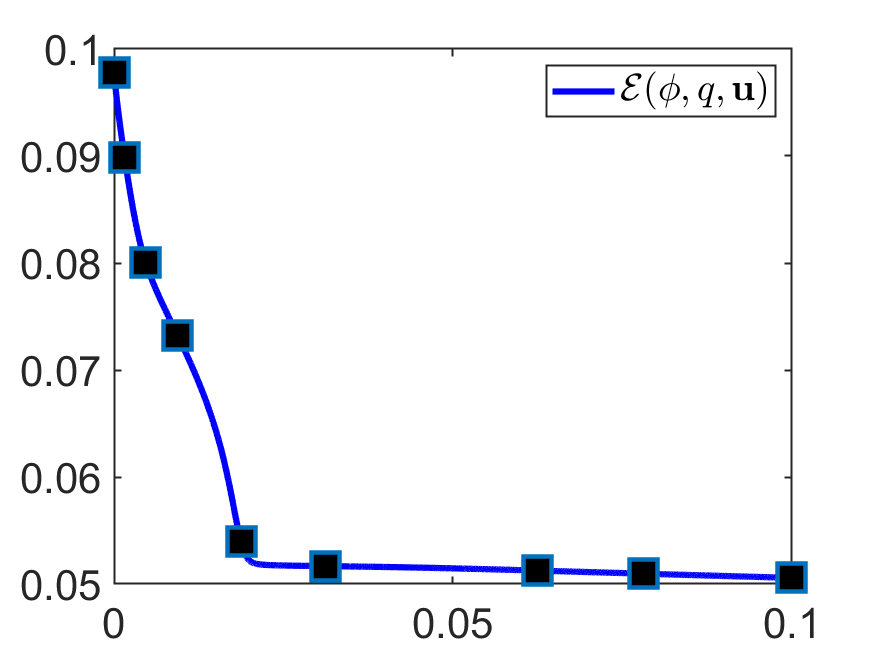} 
    &
    \includegraphics[trim={0.0cm 0.0cm 0.0cm 0.0cm},clip,scale=0.38]{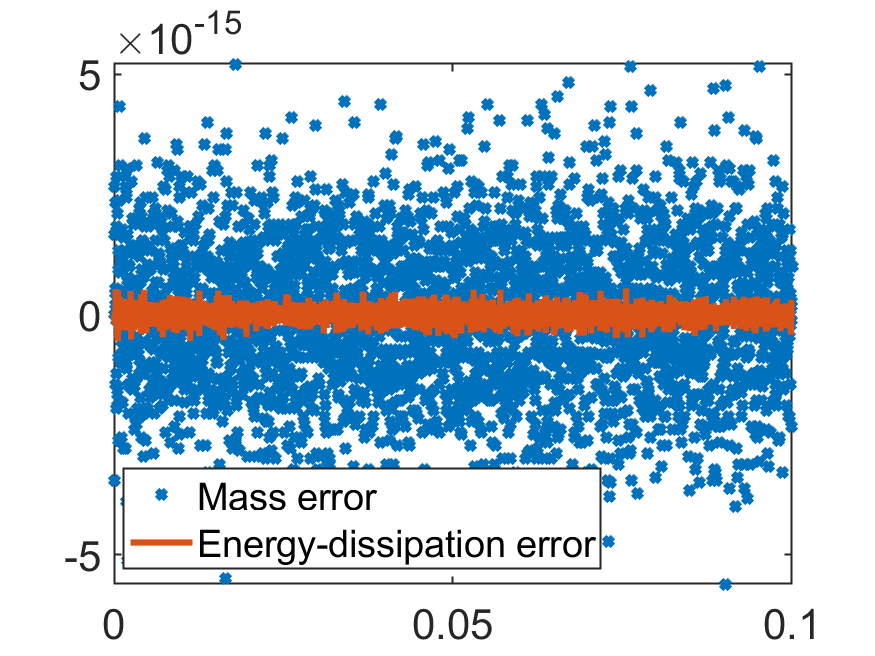}  
\end{tabular}
    \caption{Evolution of the energy $\E(\rho,\eta)$ (left) as well as validity of discrete energy-dissipation and mass conservation identities (right) for simulations obtained with $h=2^{-6}$ and $\tau=0.001\cdot h$.\label{fig:evoerr}}
\end{figure}

\subsection*{Convergence results}
We now evaluate the actual convergence rates observed in our numerical computations. 
Since no analytical solution is available, the discretization error is estimated by comparing the computed solutions $(\rho_{h,\tau},\eta_{h,\tau},\bar\mu_{\rho,h,\tau},\bar\mu_{\eta,h,\tau})$  for two consecutive uniformly refined grids.
The error quantities for the fully-discrete scheme, about which we report in the following, are defined as 
\begin{align*}
\err(h,\tau) &= \underbrace{\norm*{\rho_{h,\tau} - \rho_{h/2,\tau/2}}_{L^\infty(H^1)}}_{=\err(\rho,h,\tau)} + \underbrace{\norm*{\eta_{h,\tau} - \eta_{h/2,\tau/2}}_{L^\infty(H^1)}}_{=\err(\eta,h,\tau)} \\
&\qquad\quad+ \underbrace{\norm*{\bmu_{\rho,h,\tau} - \bmu_{\rho,h/2,\tau/2}}_{L^2(H^1)}}_{=\err(\mu_\rho,h,\tau)} + \underbrace{\norm*{\bmu_{\eta,h,\tau} - \bmu_{\eta,h/2,\tau/2}}_{L^2(L^2)}}_{=\err(\mu_\eta,h,\tau)}.
\end{align*}
According to the theoretical predictions stated in Theorem~\ref{thm:fulldisk}, each of the error components is expected to converge at second order with respect to both discretization parameters, i.e., $\err(\cdot,h,\tau)=O(h^2+\tau^2)$.
%
\begin{table}[htbp!]
\centering
\footnotesize
\caption{Errors and experimental orders of convergence. \label{tab:rates_time_chns}} 
\medskip 
%
\begin{tabular}{c||c|c||c|c||c|c||c|c}
$ k $ &  $ \err(\rho,h,\tau) $  &  eoc & $ \err(\eta,h,\tau) $  &  eoc & $ \err(\mu_\rho,h,\tau) $  &  eoc & $ \err(\mu_\eta,h,\tau) $  &  eoc \\
\hline
$ 1 $   &  $8.61\cdot 10^{+0}$ & ---   & $6.93\cdot 10^{+0}$ & ---   & $1.05\cdot 10^{-2}$ & ---   & $1.86\cdot 10^{-5}$  & ---\\
$ 2 $   & $6.77\cdot 10^{+0}$ & 0.59 & $2.48\cdot 10^{+0}$ & 1.22 & $8.35\cdot 10^{-3}$ & 0.57 & $9.71\cdot 10^{-6}$  & 0.97 \\
$ 3 $   & $1.81\cdot 10^{-1}$ & 2.29 & $1.28\cdot 10^{-1}$ & 2.07 & $4.73\cdot 10^{-4}$ & 2.04 & $4.96\cdot 10^{-7}$  & 2.07 \\
$ 4 $   & $1.22\cdot 10^{-2}$ & 1.97 & $9.66\cdot 10^{-3}$ & 1.93 & $3.16\cdot 10^{-5}$ & 1.98 & $1.13\cdot 10^{-8}$  & 2.34 \\
$ 5 $   & $8.15\cdot 10^{-4}$ & 1.97 & $6.51\cdot 10^{-4}$ & 1.97 & $2.83\cdot 10^{-6}$ & 1.87 & $7.95\cdot 10^{-10}$ & 1.96  
\end{tabular}
\end{table}
In Table~\ref{tab:rates_time_chns}, we summarize the results of our computations obtained on a sequence of uniformly refined meshes with mesh size $h=2^{-k}$, $k=1,\ldots,5$, and time steps $\tau = 0.001\cdot h$. 
The computational results are in perfect agreement with the theoretical predictions stated in Theorem~\ref{thm:fulldisk}.

\section{Discussion} \label{sec:8}
In this paper, we proposed a variational discretization approach for the Cahn-Hilliard-Allen-Cahn system with non-diagonal mobility matrix, which is based on a mixed finite-element approximation in space and a Petrov-Galerkin discretization in time. The mobility matrix is allowed to depend on both phase field variables and their spatial gradients.
Relative energy estimates were used to establish stability of the discrete problem. These estimates naturally account for the nonlinearities in the energy of the problem, which significantly simplifies the error analysis.
Let us note that a continuous version of the respective stability estimate allows to establish stability and weak-strong uniqueness also for the continuous problem; see~\cite{Brunk22}.
The chosen discretization spaces allow us to prove order optimal convergence in all variables with relaxed regularity.
In principle, the proposed schemes can be extended immediately to higher order in space and time and to more phase field variables.
Further investigations in this direction as well as the extension to more complex multiphase problems, like non-isothermal phase-field models \cite{Penrose1990}, will be topics of future research. 

}

\appendix

 \setcounter{equation}{0}
 \renewcommand{\thesection}{A}
 \renewcommand{\theequation}{\thesection.\arabic{equation}}

In the following, we summarize some well-known results about standard projection and interpolation operators and some technical facts, which are used in our analysis. 

\subsection{Space discretization}\label{asec:space}
We consider the setting of Sections~\ref{sec:prelim} and \ref{sec:main} and, in particular, assume (A5)--(A6) to hold true.
The following results then follow with standard arguments; see e.g. \cite{BrennerScott}.
The $L^2$-orthogonal projection $\pi_h^0 : L^2(\Omega) \to \Vh$,
satisfies
\begin{align} \label{eq:l2projest}
    \|u - \pi_h^0 u\|_{H^s} \leq C h^{r-s} \|u\|_{H^r} \qquad \forall u \in H^r(\Omega),
\end{align}
and all parameters $-1 \le s \le r$ and $0 \le r \le 4$. 
On quasi-uniform meshes $\Th$, which we consider here, the projection $\pi_h^0$ is also stable with respect to the $H^1$-norm, i.e., 
\begin{align} \label{eq:h1stab_l2proj}
\|\pi_h^0 u\|_{H^1} \le C \|u\|_{H^1} \qquad \forall u \in H^1(\Omega).
\end{align}
Furthermore, for every $q\in[1,\infty]$ the following stability result holds, see \cite{Douglas1974/75,crouzeix1987stability},
\begin{align}
\|\pi_h^0 u\|_{W^{1,p}} \le C \|u\|_{W^{1,p}} \qquad \forall u\in W^{1,p}(\Omega). \label{eq:l2stabw1p}   
\end{align}
The $H^1$-elliptic projection $\pi_h^1 : H^1(\Omega) \to \Vh$, defined in  \eqref{eq:defh1proj}, satisfies
\begin{align}\label{eq:h1porjest}
 \|u - \pi_h^1 u\|_{H^s} \leq C h^{r-s} \|u\|_{H^r} \qquad \forall u \in H^r(\Omega),
\end{align}
for all parameters $-1 \le s \le r$ and $1 \le r \le 3$. 
Since we assumed quasi-uniformity of the mesh $\Th$, we can further resort to the inverse inequalities
\begin{align} \label{eq:inverse}
    \|v_h\|_{H^1} \le c_{inv} h^{-1} \|v_h\|_{L^2} 
    \qquad \text{and} \qquad 
    \|v_h\|_{L^p} \le c_{inv} h^{d/p-d/q} \|v_h\|_{L^q} 
\end{align}
which hold for all discrete functions $v_h \in \Vh$ and all $1 \le q \le p \le \infty$. 

\subsection{Time discretization}\label{asec:time}
%
The piecewise linear interpolation 
$\I_\tau^1:H^1(0,T)\to P_1^c(\Itau)$ and the piecewise constant projection $\bar \pi_\tau^0 : L^2(0,T) \to P_0(\I_\tau)$ in time satisfy
\begin{align} 
    \|u - \bar\pi_\tau^0 u\|_{L^p(0,T)} &\le C \tau^{1/p-1/q+r} \|u\|_{W^{r,q}(0,T)} \quad && \forall u \in W^{r,q}(0,T), \label{eq:timprojest}\\
    \|u - \I_\tau^1 u\|_{L^p(0,T)} &\leq C\tau^{1/p-1/q+2} \|u\|_{W^{r,q}(0,T)} \quad && \forall u \in W^{s,q}(0,T) \label{eq:timinterpest}
\end{align}
with $1 \le p \le q  \le \infty$ and for $0 \le r \le 1$ respectively $1 \le s \le 2$.
Moreover, these operators commute with differentiation in the sense that
\begin{align} \label{eq:commuting} 
\dt (\I_\tau^1 u) = \bar \pi_\tau^0 (\dt u).
\end{align}

\subsection{Projection estimates for nonlinear terms}

We now derive an estimate for the projection error of products of functions in time. 

\begin{lemma} \label{lem:average_time_err}
Let $\bar a = \pi_k a$ denote the $L^2$-orthogonal projection onto $P_{0}(\Itau)$.  
Then for any $u,v \in W^{2,p}(0,T)$ with $1 \le p \le \infty$, one has 
\begin{align}
\|\overline{\bar u\bar v}-\overline{uv}\|_{L^p(0,T)} 
&\le C \tau^{2} \|u\|_{W^{2,p}(0,T)} \|v\|_{W^{2,p}(0,T)},  \label{eq:midpoint_order}
\end{align}
with a constant $C$ depending only on the polynomial degree $k$. 
\end{lemma}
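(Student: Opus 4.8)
The plan is to reduce everything to a single interval and to exploit that $\bar{\cdot}$ acts as the interval average. First I would note that on each $I_n$ the projection is the constant $\bar a|_{I_n}=\tfrac1\tau\int_{I_n}a\,ds$, so that $\bar u\bar v$ is already piecewise constant and hence $\overline{\bar u\bar v}=\bar u\bar v$. Consequently, on $I_n$ the quantity in question equals the (negative) covariance
\begin{equation*}
(\overline{\bar u\bar v}-\overline{uv})\big|_{I_n} = \bar u\,\bar v - \tfrac1\tau\int_{I_n}uv\,ds =: -E_n(u,v),
\end{equation*}
which is constant on $I_n$; therefore $\norm{\overline{\bar u\bar v}-\overline{uv}}_{L^p(I_n)}=\tau^{1/p}\snorm{E_n(u,v)}$, and it remains to bound each $E_n$ sharply and to reassemble the global norm.

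Second, I would establish the per-interval estimate
\begin{equation*}
\snorm{E_n(u,v)} \le C\,\tau\,\norm{u'}_{L^p(I_n)}\,\norm{v'}_{L^{p'}(I_n)}, \qquad \tfrac1p+\tfrac1{p'}=1,
\end{equation*}
by a bilinear Bramble--Hilbert argument. The map $(u,v)\mapsto E_n(u,v)$ is bounded and bilinear on $W^{1,p}(I_n)\times W^{1,p'}(I_n)$ and vanishes whenever either argument is constant, since $\bar{\cdot}$ reproduces constants. Rescaling $I_n$ to the reference interval $(0,1)$ leaves the value of $E_n$ invariant, so the bilinear Bramble--Hilbert lemma on $(0,1)$ bounds it by the product of the two first-order seminorms there; transforming back and using $\tfrac1p+\tfrac1{p'}=1$ produces exactly the single explicit power $\tau^1$ together with the two first-derivative norms.

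Third, I would reassemble the global norm via $\norm{\overline{\bar u\bar v}-\overline{uv}}_{L^p(0,T)}^p=\sum_n\tau\snorm{E_n}^p$. Choosing the conjugate pair $(p,p')=(\infty,1)$ in the per-interval estimate, I would bound one factor by $\norm{u'}_{L^\infty(I_n)}\le\norm{u'}_{L^\infty(0,T)}\le C\norm{u}_{W^{2,p}(0,T)}$, using the one-dimensional embedding $W^{1,p}(0,T)\hookrightarrow L^\infty(0,T)$, and the other by $\norm{v'}_{L^1(I_n)}\le\tau^{1-1/p}\norm{v'}_{L^p(I_n)}$ via Hölder on an interval of length $\tau$. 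Collecting the powers of $\tau$ and summing $\sum_n\norm{v'}_{L^p(I_n)}^p=\norm{v'}_{L^p(0,T)}^p$ then yields $\tau^{2}$ together with the product $\norm{u}_{W^{2,p}}\norm{v}_{W^{2,p}}$; the endpoint $p=\infty$ is handled identically with $\norm{\cdot}_{L^1(I_n)}\le\tau\norm{\cdot}_{L^\infty(I_n)}$.

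The main obstacle is the sharp per-interval estimate. One must recognise that $E_n$ does \emph{not} annihilate affine functions, being a genuine second-order quadrature/covariance error, so the single power $\tau$ in the Bramble--Hilbert bound is deceptive: the second factor of $\tau$ emerges only after the scaling relation $\norm{u'}_{L^p(I_n)}\sim\tau^{1/p}\norm{u'}$ is combined across the two conjugate exponents. Tracking these exponents carefully, and in particular routing the $W^{2,p}$ regularity through the embedding $\norm{u'}_{L^\infty}\le C\norm{u}_{W^{2,p}}$ so that the global assembly lands exactly on $\tau^2$ rather than on $\tau^{2-1/p}$, is the only delicate point; the remaining steps are routine.
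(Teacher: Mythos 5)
Your proposal is correct, and there is in fact nothing to compare it against: the paper states Lemma~\ref{lem:average_time_err} in the appendix without proof, so your argument fills a gap left to the reader. All three steps check out. Since $\bar u\bar v$ is already piecewise constant, the error on each $I_n$ is the constant $E_n(u,v)=\tfrac1\tau\int_{I_n}uv\,ds-\bar u\,\bar v$, and your per-interval bound $|E_n(u,v)|\le C\tau\|u'\|_{L^q(I_n)}\|v'\|_{L^{q'}(I_n)}$ is valid; note it can be obtained even more elementarily, without invoking the bilinear Bramble--Hilbert lemma, from the covariance identity
\begin{equation*}
E_n(u,v)=\frac{1}{2\tau^2}\int_{I_n}\!\!\int_{I_n}\bigl(u(s)-u(t)\bigr)\bigl(v(s)-v(t)\bigr)\,ds\,dt,
\end{equation*}
combined with $|u(s)-u(t)|\le\|u'\|_{L^1(I_n)}$ and H\"older. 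Your exponent bookkeeping in the global assembly is also exactly right: the asymmetric pairing $(q,q')=(\infty,1)$, the embedding $\|u'\|_{L^\infty(0,T)}\le C\|u\|_{W^{2,p}(0,T)}$, and the factor $\tau^{1/p}$ from the $L^p$-norm of an interval-wise constant combine to give precisely $\tau^{2}$, whereas the symmetric pairing would indeed strand you at $\tau^{2-1/p}$. Two harmless remarks: your argument actually proves the stronger bound $C\tau^{2}\|u'\|_{L^\infty(0,T)}\|v'\|_{L^p(0,T)}$, so the full $W^{2,p}$ regularity enters only through the Sobolev embedding; and your constant consequently depends on the embedding constant of $W^{1,p}(0,T)\hookrightarrow L^\infty(0,T)$, hence on $T$ --- which is consistent with how the lemma is used, the paper's own phrase ``depending only on the polynomial degree $k$'' being vestigial since the projection here is onto $P_0(\Itau)$.
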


\begin{lemma}
Let $\bar a = \pi_0 a$ denote the $L^2$-orthogonal projection onto $P_{0}(\Itau)$. Furthermore, let $\phi\in P_1(\Itau)$. 
Then for any $u,v \in W^{2,p}(0,T)$ with $1 \le p \le \infty$, one has 
\begin{align}
\| g(\bar\phi)-\overline{g(\phi)}\|_{L^p(0,T)} 
&\le C \tau^{2} \|g(\phi)\|_{W^{2,p}(0,T)},  \label{eq:midpoint_order_single}
\end{align}
with a constant $C$ depending only on certain continuous embedding constants. 
\end{lemma}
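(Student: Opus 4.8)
The plan is to reduce the estimate to a single time interval $I_n=(t^{n-1},t^n)$ and to recognize the left-hand side as the error of the midpoint quadrature rule applied to $g(\phi)$. First I would exploit that $\phi\in P_1(\Itau)$ is affine on each $I_n$, so its $L^2$-projection onto constants coincides with its mean value and hence with its midpoint value; that is, $\bar\phi|_{I_n}=\frac{1}{\tau}\int_{I_n}\phi=\phi(t^{n-1/2})$. Consequently $g(\bar\phi)|_{I_n}$ is the constant $g(\phi(t^{n-1/2}))$, while $\overline{g(\phi)}|_{I_n}=\frac{1}{\tau}\int_{I_n}g(\phi)$ is the mean of $G:=g(\phi)$ over $I_n$. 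The restriction of $g(\bar\phi)-\overline{g(\phi)}$ to $I_n$ is therefore the constant $E_n:=G(t^{n-1/2})-\frac{1}{\tau}\int_{I_n}G$, which is precisely the local midpoint-rule error for $G$.

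Second, I would bound $E_n$ using that the midpoint rule is exact on affine functions. A Taylor expansion of $G$ about $t^{n-1/2}$ shows that the linear term integrates to zero over the symmetric interval $I_n$, leaving only the second-order remainder; equivalently, a Peano kernel representation yields $E_n=\tau\int_{I_n}\hat K\big((s-t^{n-1/2})/\tau\big)\,G''(s)\,ds$ with a fixed bounded reference kernel $\hat K$ supported on the reference interval. By Hölder's inequality and the change of variables $s\mapsto(s-t^{n-1/2})/\tau$, which contributes a factor $\tau^{1/p'}$ to $\|\hat K(\cdot)\|_{L^{p'}(I_n)}$, this gives the local bound $|E_n|\le C\,\tau^{2-1/p}\,\|G''\|_{L^p(I_n)}$, with $C$ depending only on $\|\hat K\|_{L^{p'}}$.

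Finally, since the difference is constant on $I_n$, its $L^p(I_n)$-norm equals $|E_n|\,\tau^{1/p}$, so that $\|g(\bar\phi)-\overline{g(\phi)}\|_{L^p(I_n)}\le C\,\tau^{2}\,\|G''\|_{L^p(I_n)}$. Raising to the $p$th power and summing over $n=1,\dots,N$ (and taking the supremum over intervals when $p=\infty$) then produces the global estimate $\|g(\bar\phi)-\overline{g(\phi)}\|_{L^p(0,T)}\le C\,\tau^{2}\,\|G''\|_{L^p(0,T)}\le C\,\tau^2\,\|g(\phi)\|_{W^{2,p}(0,T)}$, where the $W^{2,p}$-norm is understood in the broken, interval-wise sense since $\phi$ may be discontinuous across grid points. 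The only point requiring care is the bookkeeping of the powers of $\tau$ in the scaling and Hölder step for general $p\in[1,\infty]$: the identity $1+1/p'=2-1/p$ is what makes the two factors $\tau^{2-1/p}$ and $\tau^{1/p}$ combine to the claimed $\tau^2$. The composition $G=g(\phi)$ lies in the broken $W^{2,p}$-space because $\phi$ is affine and $g$ is smooth, with $G''=g''(\phi)(\phi')^2$ on each $I_n$; the embedding constants mentioned in the statement enter only through bounding $\|G''\|_{L^p}$ by $\|g(\phi)\|_{W^{2,p}}$.
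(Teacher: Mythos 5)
Your proof is correct. The paper itself states this lemma without proof (it sits in the appendix among ``well-known results \dots{} and some technical facts''), so there is no argument of the authors to compare against; your route --- observing that $\bar\phi|_{I_n}=\phi(t^{n-1/2})$ because the $L^2$-projection of an affine function onto constants is its midpoint value, so that $g(\bar\phi)-\overline{g(\phi)}$ is interval-wise the midpoint-quadrature error of $G=g(\phi)$, then bounding that error by a Taylor/Peano-kernel argument with the scaling $\tau^{2-1/p}\cdot\tau^{1/p}=\tau^{2}$ --- is precisely the standard argument the authors presumably intended, and your exponent bookkeeping for general $p\in[1,\infty]$ checks out. Two of your side remarks are genuine improvements on the paper's sloppy statement: the norm $\|g(\phi)\|_{W^{2,p}(0,T)}$ must indeed be read in the broken, interval-wise sense (for discontinuous $\phi\in P_1(\Itau)$ the composition $g(\phi)$ is not globally in $W^{2,p}$, yet only local bounds $G''=g''(\phi)(\phi')^{2}$ on each $I_n$ are ever used), and the hypothesis on $u,v\in W^{2,p}$ in the statement is vestigial and plays no role.
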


\subsection{Discrete Gronwall lemma}

In our stability analysis, we also employ the following well-known argument, whose proof follows immediately by induction.
\begin{lemma} \label{lem:discgronwall}
Let $(a^n)_n$, $(b_n)_n$, $(c_n)_n$, and $(\lambda_n)_n$ be given positive sequences, satisfying
\begin{align*}
    u_n + b_n \le e^{\lambda_n} u_{n-1} + c_n, \qquad n \ge 0.
\end{align*}
Then 
\begin{align} \label{eq:discgronwall}
    u_n + \sum_{k=1}^n e^{\sum_{j={k+1}}^{n} \lambda_j} b_k 
    \le e^{\sum_{j=1}^n \lambda_j} u_0 + \sum_{k=0}^n e^{\sum_{j={k+1}}^{n} \lambda_j} c_k, \quad n > 0. 
\end{align}
\end{lemma}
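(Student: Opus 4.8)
The plan is to prove the weighted bound \eqref{eq:discgronwall} by introducing a cumulative \emph{integrating factor} and telescoping, which is the mechanism underlying the induction announced in the statement. I set $E_0 := 1$ and $E_n := e^{\sum_{j=1}^n \lambda_j}$ for $n \ge 1$, so that $e^{\lambda_n} = E_n/E_{n-1}$ and, more generally, $E_n/E_k = e^{\sum_{j=k+1}^n \lambda_j}$ for $0 \le k \le n$. These ratios are precisely the weights appearing on both sides of \eqref{eq:discgronwall}, so the entire argument reduces to bookkeeping with the $E_n$.

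First I would divide the hypothesis $u_n + b_n \le e^{\lambda_n} u_{n-1} + c_n$ by $E_n > 0$ and use $e^{\lambda_n}/E_n = 1/E_{n-1}$ to obtain the normalized recursion
\[
\frac{u_n}{E_n} + \frac{b_n}{E_n} \le \frac{u_{n-1}}{E_{n-1}} + \frac{c_n}{E_n}, \qquad n \ge 1 .
\]
The purpose of this normalization is that the $u$-terms telescope: summing over $n = 1,\ldots,N$ and cancelling the intermediate $u_n/E_n$ contributions leaves
\[
\frac{u_N}{E_N} + \sum_{n=1}^N \frac{b_n}{E_n} \le \frac{u_0}{E_0} + \sum_{n=1}^N \frac{c_n}{E_n} = u_0 + \sum_{n=1}^N \frac{c_n}{E_n},
\]
using $E_0 = 1$. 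Equivalently, one proves exactly this inequality by induction on $N$, the inductive step being a single application of the normalized recursion; the telescoped form is just its closed expression. Multiplying through by $E_N > 0$ and substituting $E_N/E_n = e^{\sum_{j=n+1}^N \lambda_j}$ and $E_N = e^{\sum_{j=1}^N \lambda_j}$ then yields
\[
u_N + \sum_{n=1}^N e^{\sum_{j=n+1}^N \lambda_j} b_n \le e^{\sum_{j=1}^N \lambda_j} u_0 + \sum_{n=1}^N e^{\sum_{j=n+1}^N \lambda_j} c_n,
\]
which is the asserted estimate except that the $c$-sum on the right runs from $n=1$ rather than from $n=0$. Since all sequences are positive, the extra term $e^{\sum_{j=1}^N \lambda_j} c_0 \ge 0$ only enlarges the right-hand side, so \eqref{eq:discgronwall} follows a fortiori.

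There is no genuine analytic difficulty here; the whole content is elementary algebra built on the integrating factor. The only thing to be careful about is the index bookkeeping, and this is exactly where a naive induction can produce an off-by-one error: the empty-sum conventions ($E_0 = 1$ and $e^{\sum_{j=n+1}^n \lambda_j} = 1$), the fact that the stated right-hand side deliberately carries the superfluous nonnegative $c_0$-weight, and the use of positivity precisely at the final step to pass from the sharp telescoped bound to the slightly looser claimed one. I would therefore state the empty-sum conventions explicitly at the outset so that the base case $N=1$ and the general step read off without ambiguity.
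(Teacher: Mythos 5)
Your proof is correct, and it is essentially the argument the paper has in mind: the paper dispenses with the lemma by saying it ``follows immediately by induction,'' and your integrating-factor normalization with telescoping is exactly that induction written out in closed form (as you yourself note). The index bookkeeping, including the observation that the stated bound carries a superfluous nonnegative $c_0$-term and therefore follows a fortiori from the sharp telescoped inequality, is handled correctly.
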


{
\bigskip 

\section*{Acknowledgement}
Support by the German Science Foundation (DFG) via SPP~2256: \textit{Variational Methods for Predicting Complex Phenomena in Engineering Structures and Materials} (projects Eg-331/2-1 and BR~7093/1-2) and via TRR~146: \textit{Multiscale Simulation Methods for Soft Matter Systems} (project C3) is gratefully acknowledged. Part of the research was conducted during a research stay of the first author at RICAM/JKU Linz.}


\end{document}